\numberwithin{equation}{section}
\newcommand{\E}{\mathbb E}
\newcommand{\vol}{\operatorname{Vol}}
\newcommand{\R}{\mathbb{R}}
\theoremstyle{plain}
\newtheorem{theorem}{Theorem}[section]
\newtheorem{proposition}[theorem]{Proposition}
\newtheorem{lemma}[theorem]{Lemma}
\newtheorem{corollary}[theorem]{Corollary}
\theoremstyle{definition}
\newtheorem{remark}[theorem]{Remark}
\newcommand{\Cc}{\mathcal{C}}
\begin{document}

\title[No repulsion between Critical Points]{No repulsion between critical points for planar Gaussian random fields}


\author{Dmitry Beliaev}
\email{dmitry.belyaev@maths.ox.ac.uk}
\address{Mathematical Institute, University of Oxford}

\author{Valentina Cammarota}
\email{valentina.cammarota@uniroma1.it}
\address{Department of Statistics, Sapienza University of Rome}

\author{Igor Wigman}
\email{igor.wigman@kcl.ac.uk}
\address{Department of Mathematics, King's College London}




\date{\today}

\begin{abstract}
We study the behaviour of the point process of critical points of isotropic stationary Gaussian fields. We compute the main term in the asymptotic expansion of the two-point correlation function near the diagonal. Our main result implies that for a `generic' field the critical points neither repel nor attract each other. Our analysis also allows to study how the short-range behaviour of critical points depends on their index.
\end{abstract}

\maketitle

\section{Introduction}

\subsection{Two-point correlation function for critical points of planar random fields}

The number of critical points of a function
and their positions are its important qualitative descriptor,
and their study is an actively pursued field of research within a wide range of disciplines, such as
classical analysis (see e.g. ~\cite{Logunov-Sodin etc}), probability (e.g. ~\cite{CamMarWig,CamWig}),
mathematical and theoretical physics (\cite{Hitchin}), spectral geometry (e.g. ~\cite{NS2009,JN}), and cosmology and the study of Cosmic Microwave Background (CMB) radiation (e.g ~\cite{CCFMS}).
In case $F:\R^{2}\rightarrow\R$ (or, more generally, $F:\R^{d}\rightarrow\R$, $d\ge 2$)
is a smooth Gaussian random field, then its set of critical points $\Cc_{F}$ is
a {\em point process} on $\R^{2}$ (resp. $\R^{d}$). If we assume in addition that $F$ is stationary, then it
is possible to employ the Kac-Rice method in order to obtain that, under some mild non-degeneracy assumptions
on $F$ and its mixed derivatives up to $2$nd order, the expected number of critical points lying in a ball or radius $R$, $B(R)\subseteq \R^{2}$
is given precisely by
\begin{equation*}
\E[\# ( \Cc_{F}\cap B(R))] = c_{F}\cdot \vol(B(R)),
\end{equation*}
where $c_{F}>0$ is a constant that could be expressed in terms of some derivatives of the covariance function of $F$
evaluated at the diagonal.

It is then compelling to study the law of $\Cc_{F}$ in more depth, e.g., the variance of $$\# \Cc_{F}(B(R)):=\#(\Cc_{F}\cap B(R)),$$
and the relative positions of critical points, e.g. their attraction and repulsion. Let $$K_{2}(x-y)=K_{2}(x,y)$$
be the $2$-point correlation function of the point process $\Cc_{F}$ (or other point processes) defined as
\begin{equation*}
K_{2}(x-y) = \lim\limits_{\epsilon_{1},\epsilon_{2}\rightarrow 0}
\frac{1}{\vol(B(\epsilon_{1}))\cdot\vol(B(\epsilon_{2}))}\E[\#\Cc_{F}(B_{x}(\epsilon_{1}))\cdot \#\Cc_{F}(B_{y}(\epsilon_{2}))],
\end{equation*}
where $B_x(\epsilon)$ is the radius-$\epsilon$ ball centred at $x$. Note that $K_2$ clearly depends on $F$ but we omit this to simplify the notation. The corresponding field will be always clear from the context. 

Given $K_2$ we immediately get the formula for the second factorial moment of the number of critical points via
\begin{equation}
\label{eq:fact mom Kac Rice}
\E[\#\Cc_{F}(B(R))\cdot (\# \Cc_{F}(B(R))-1)] = \int\limits_{B(R)\times B(R)} K_{2}(x,y)dxdy.
\end{equation}
For $\Cc_{F}$ (and other point processes that are zeros of random Gaussian fields, with $\Cc_{F}$ being the zero set of $\nabla F$),
one can usually derive the $2$-point correlation function via the Kac-Rice formula
\begin{equation}
\label{eq:K2 Kac Rice}
K_{2}(x,y) = \phi_{(\nabla F (x),\nabla F(y))}(0,0)\cdot
\E\left[\left|\det{H_{F}(x)}\cdot \det{H_{F}(y)}\right|\big| \nabla F(x)=\nabla F(y)=0\right],
\end{equation}
where $\phi_{(\nabla F(x),\nabla F(y))}(\cdot)$ is the density of the Gaussian vector $(\nabla F(x),\nabla F(y)) \in \R^{2}\times\R^{2}$,
and $H_{F}(\cdot)$ is the Hessian of $F$ (we exclude the diagonal $\{x=y\}$ which does not contribute to the integral, but the near-diagonal behaviour when $x\approx y$ will play a crucial role). The function \eqref{eq:K2 Kac Rice} is, in turn,
a semi-explicit function of the covariance function of $F$ and its couple of mixed derivatives.

If, in addition, $F$ is assumed to be {\em isotropic}, then $K_{2}(x,y)$ is a function of the Euclidean distance $r=\|x-y\|$.
In many cases, when the covariance function of $F$ is decaying sufficiently rapidly,
the {\em long range} asymptotics of $K_{2}(r)$, $r\rightarrow\infty$ yields the asymptotic variance of the number of critical points
in large balls $B(R)$, $R\rightarrow\infty$, see e.g. ~\cite{CamMarWig,CamWig}, and other quantities, such as the nodal length
of $F$ ~\cite{Berry,Wig}. On the contrary, the {\em short range} asymptotics of $K_{2}(r)$, $r\rightarrow 0$ yields the asymptotic
law of the second factorial moment of the number of critical points of $F$ belonging to {\em small} balls $B(r)$, $r\rightarrow 0$,
again via \eqref{eq:fact mom Kac Rice}. Informally, the probability that there is one critical point in a ball $B(r)$ of small radius $r>0$ is approximately $cr^2$, whereas the probability that there are two critical points in $B(r)$ is
approximately $\iint\limits_{B(r)\times B(r)} K_2(x,y)dxdy$.
If $K_2(r)\to \infty$ as $r\rightarrow 0$, then the probability to have two critical points in $B(r)$ is much higher than the square of the probability to have one critical point in $B(r)$. In this case we say that the critical points \emph{attract} each other. Otherwise, if
$K_2(r)\to 0$ as $r\rightarrow 0$, then the probability to have two critical points in $B(r)$ is much lower than the square of the probability to have one such point. In this case we say that the critical points \emph{repel} each other.

\vspace{2mm}

The first relevant result \cite{BCW2017} was obtained in 2017 when we analysed the asymptotic behaviour of $K_2$ for a particular Gaussian field:  the random monochromatic isotropic plane waves, also referred to as ``Berry's Random Wave Model" (RWM). This field is of a particular interest since it is believed to represent the (deterministic) Laplace eigenfunctions on ``generic" chaotic surfaces, in the high energy limit ~\cite{Berry77}. The RWM is the stationary isotropic random field $F:\R^{2}\rightarrow\R$, uniquely defined by the covariance
function
\[
C_{F}(x):=\E[F(y)\cdot F(x+y)]=J_{0}(\|x\|),
\]
with $J_{0}(\cdot)$ the Bessel $J$ function of the first order.

\begin{figure}
\includegraphics[width=0.27\textwidth]{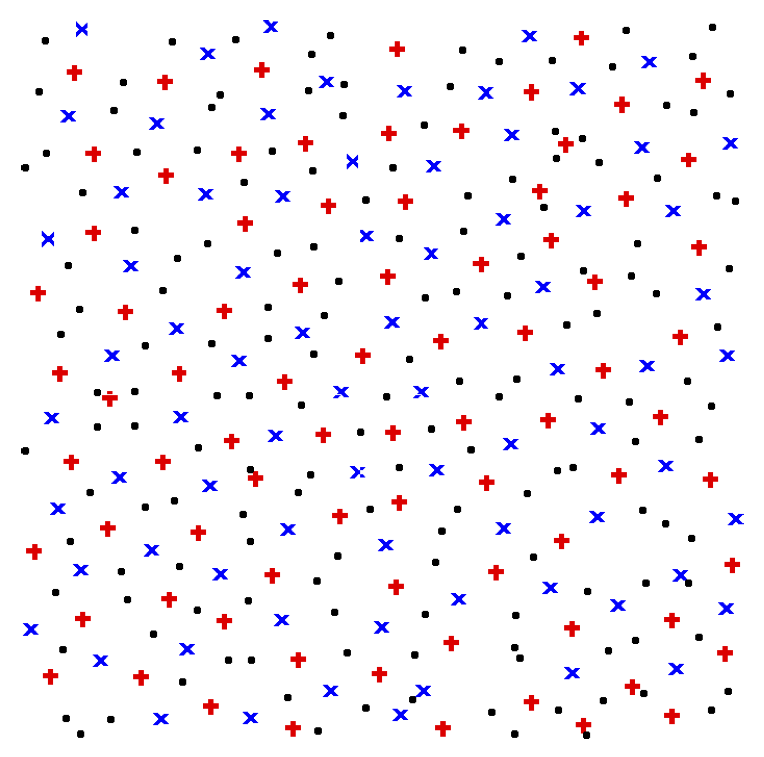}\hspace{0.05\textwidth}
\includegraphics[width=0.27\textwidth]{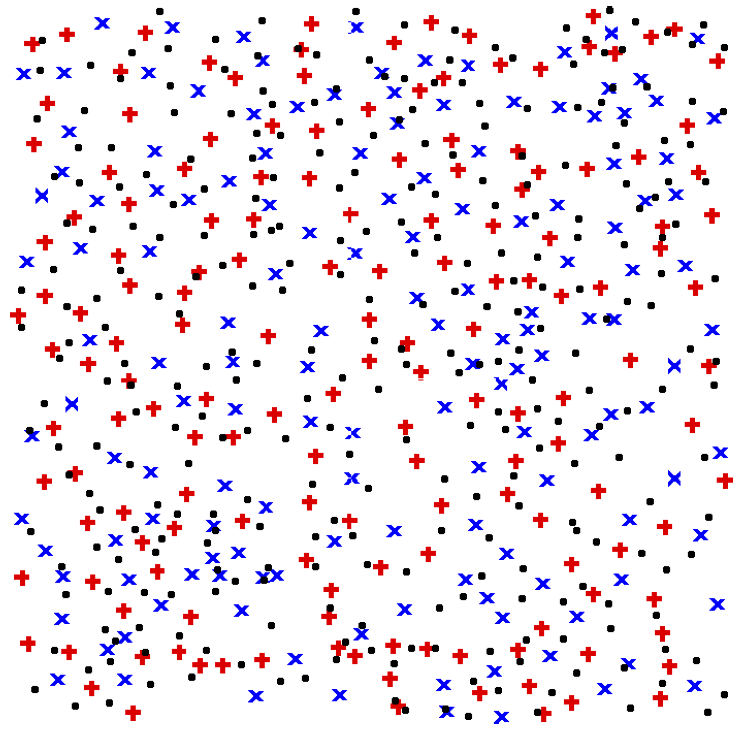}\hspace{0.05\textwidth}
\includegraphics[width=0.27\textwidth]{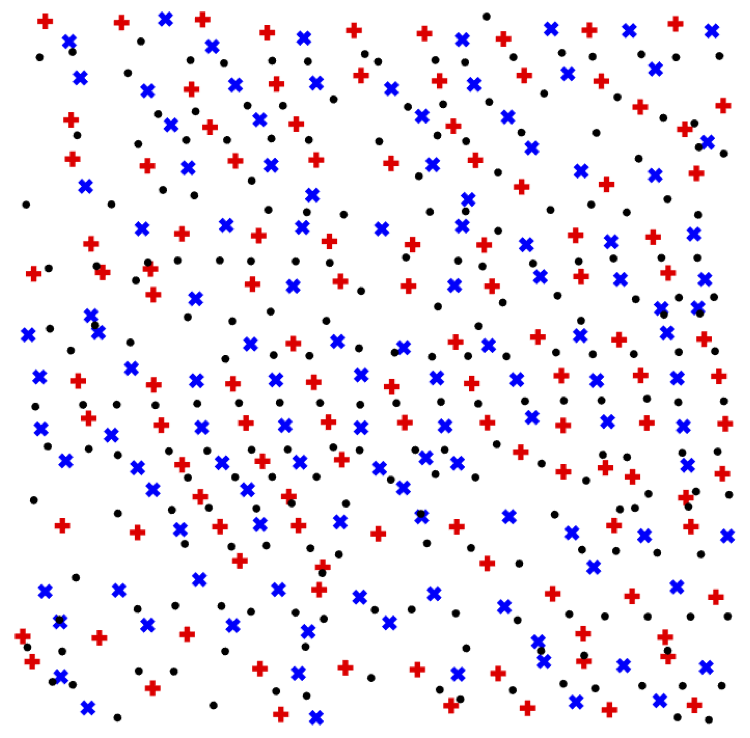}
\caption{Critical points for the random plane wave (left), Bargmann-Fock field (center) and an anisotropic field. Minima and maxima are red and blue plusses and crosses, critical points are black dots. }
\label{fig: critical points}
\end{figure}

The work \cite{BCW2017} was motivated by Figure \ref{fig: critical points} (left) which apparently shows that the critical point repel each other.  It was found  \cite[p. 10]{BCW2017} that for RWM the two-point correlation function $K_{2}^{\textrm{RWM}}(r)$ has the following asymptotic around the diagonal to
\vspace{-0.1cm}
\begin{equation}
\label{eq:K2 asymp BCW}
K_{2}^{\textrm{RWM}}(r) = \frac{1}{2^{5}3\sqrt{3}\pi^{2}}+O_{r\rightarrow 0}(r^{2}),
\end{equation}
\vspace{-0.1cm}
so that, in particular, the critical points of $F$ exhibit no repulsion nor attraction. It was then inferred that the seemingly visible repulsion on some numerically generated pictures could be attributed to {\em rigidity} of critical points, a notion cardinally different from repulsion. The work ~\cite{BCW2017} also allowed for the separation of the critical points into maxima, minima and saddles, and studied the effect of such a separation on the corresponding $2$-point correlation function, resulting in some cases in qualitatively different
behaviour to \eqref{eq:K2 asymp BCW}.

It is then natural to inquire about the analogous question for {\em other} Gaussian random fields, i.e. for the asymptotic law of the $2$-point-correlation function
around the diagonal for {\em other} Gaussian random fields. In particular, whether it is true, that for a generic stationary field, the critical points nether attract nor repel each other.
That critical points do not attract was resolved by Muirhead \cite{Muirhead2019}, who, among other things, proved that there is no attraction for `generic' stationary planar Gaussian random fields, without assuming that the underlying random field is isotropic.

Our first principal Theorem \ref{thm:exp 2 corr main} below yields that
for a Gaussian isotropic random field $F$ satisfying some generic assumptions, one has
\vspace{-0.1cm}
\begin{equation}
\label{eq: K_2 asymptotic}
K_{2}(r) = a_{F}+O_{r\rightarrow 0}(r)
\end{equation}
\vspace{-0.1cm}
with $a_{F}>0$ explicitly evaluated in terms of some derivatives of the covariance function of $F$.
In particular, it implies that the critical points of an isotropic stationary Gaussian field do not repel each other, only just
(see Corollary \ref{cor:no repulsion} below).
The value of $a_F$ in \eqref{eq: K_2 asymptotic} is of no particular significance other than its mere positivity.
What might be of some interest is the relation between
the density of the critical points $c_F$ and $a_F$. It can show, by comparison with a Poisson process of the same intensity $c_F$, that  critical points are more clustered or dispersed on a small scale compared to the corresponding Poisson process.
Finally, we do believe that the isotropic assumption is not essential for the validity of \eqref{eq: K_2 asymptotic}. There is a couple of reasons to believe in that. The first reason is that the `no attraction' result by Muirhead \cite{Muirhead2019} is indeed valid for generic fields. Our other reasoning lyes within the main structure of our arguments. The isotropic assumption is used to reduce the number of variables and to make the two-point function more amenable to explicit computation. In the isotropic case there are no mysterious cancellations, suggesting the same
for the asymptotic behaviour in the generic case.

\subsection{Statement of the main results}

Our main result concerns the short range asymptotics for the $2$-point correlation function corresponding to smooth stationary isotropic
Gaussian fields $F:\R^{2}\rightarrow\R$. Let $$C_{F}(r)=C_{F}(\|x-y\|):=\E[F(x)\cdot F(y)]$$ be the covariance function of $F$. Assuming that
$F$ is sufficiently smooth and unit variance, and taking into account that $C_{F}$ is even and for every $k\ge 0$,
$$C_{F}^{(2k)}(0) =  (-1)^{k} \E[(\partial_{1}^{k}F(0))^{2}],$$ we may Taylor expand $C_{F}(r)$ around the origin as
\begin{equation}
\label{eq:covar func gen Taylor}
C_{F}(r)=1-g_{2}r^{2}+g_{4}r^{4}-g_{6}r^{6}+O(r^{8}),
\end{equation}
where for all $k\ge 1$, we have
\begin{equation}
\label{eq:g2k def}
g_{2k} = (-1)^{k}\frac{C_{F}^{(2k)}(0)}{(2k)!}\ge 0.
\end{equation}

By rescaling $F$ if necessary, we may further assume w.l.o.g. that $g_{2}=1$ (if $g_{2}$ vanishes it would force $F$
to be a.s. linear, which would contradict $F$ being isotropic, unless $F$ is constant), so that \eqref{eq:covar func gen Taylor} reads
\begin{equation}
\label{eq: C series}
C_{F}(r)=1-r^{2}+g_{4}r^{4}-g_{6}r^{6}+o(r^{6}).
\end{equation}

The following proposition, which is itself an immediate consequence of the Kac-Rice formula \cite[Sections 6.1 and 6.2]{Adler}, can thereupon not be considered as ``new".
\begin{proposition}
For every $R>0$,
\[
\E[\# ( \Cc_{F}\cap B(R))] = \frac{8}{\sqrt 3} g_4 R^2.
\]
\end{proposition}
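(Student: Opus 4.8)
The plan is to apply the Kac-Rice formula to the gradient field $\nabla F$, whose zero set is exactly $\Cc_F$. This expresses the expected count as
\[
\E[\#(\Cc_F\cap B(R))]=\int_{B(R)}\phi_{\nabla F(x)}(0)\,\E\!\left[|\det H_F(x)|\,\big|\,\nabla F(x)=0\right]dx,
\]
and by stationarity the integrand is independent of $x$, so the whole expression collapses to $c_F\cdot\vol(B(R))=c_F\pi R^2$, where $c_F$ is the integrand evaluated at the origin. It then suffices to show $c_F=\tfrac{8g_4}{\pi\sqrt{3}}$, since $\vol(B(R))=\pi R^2$.

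The next step is to read off the covariances of the derivatives of $F$ at the origin from the expansion \eqref{eq: C series}, using $\E[\partial^\alpha F\,\partial^\beta F]=(-1)^{|\alpha|}\partial^{\alpha+\beta}C_F|_0$. Because $C_F$ is even, all odd-order derivatives vanish at $0$; in particular $\E[F_iF_{jk}]=0$, so $\nabla F(0)$ is independent of the Hessian $H_F(0)$ and the conditioning above is vacuous. The quadratic term $-r^2$ yields $\E[F_iF_j]=2\delta_{ij}$, so $\nabla F(0)\sim N(0,2I_2)$ and $\phi_{\nabla F(0)}(0)=\tfrac{1}{4\pi}$. Writing $g_4r^4=g_4(u_1^2+u_2^2)^2$ and differentiating four times gives $\E[F_{11}^2]=\E[F_{22}^2]=24g_4$, $\E[F_{12}^2]=\E[F_{11}F_{22}]=8g_4$, and $\E[F_{11}F_{12}]=\E[F_{22}F_{12}]=0$.

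The crux is then the evaluation of $\E[|\det H_F(0)|]=\E[|F_{11}F_{22}-F_{12}^2|]$. I would diagonalize the $(F_{11},F_{22})$ block by passing to $(F_{11}\pm F_{22})/\sqrt2$, which are independent with variances $32g_4$ and $16g_4$; together with the independent $F_{12}$ (variance $8g_4$) this rewrites the determinant as $8g_4\bigl(2X^2-Y^2-Z^2\bigr)$ for i.i.d.\ standard normals $X,Y,Z$. Setting $W=Y^2+Z^2\sim\mathrm{Exp}(1/2)$, one first computes the inner expectation $\E[\,|2x^2-W|\,]=2x^2-2+4e^{-x^2}$ by a piecewise integration, and then integrates against the law of $X$, the decisive Gaussian moment being $\E[e^{-X^2}]=1/\sqrt{3}$. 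This gives $\E[|2X^2-Y^2-Z^2|]=4/\sqrt{3}$, hence $\E[|\det H_F(0)|]=32g_4/\sqrt{3}$.

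Combining the factors yields $c_F=\tfrac{1}{4\pi}\cdot\tfrac{32g_4}{\sqrt3}=\tfrac{8g_4}{\pi\sqrt3}$, so $\E[\#(\Cc_F\cap B(R))]=c_F\pi R^2=\tfrac{8}{\sqrt3}g_4R^2$, as claimed. The genuinely delicate point is the computation of $\E[|\det H_F(0)|]$: the absolute value forces one to track the sign changes of the determinant (which correspond precisely to the split between saddles and extrema), and it is exactly this piecewise integration against $W$ that produces the factor $1/\sqrt3$. Everything else is routine, if careful, bookkeeping of Gaussian moments.
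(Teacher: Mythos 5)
Your proof is correct and follows essentially the same route as the paper: Kac--Rice applied to $\nabla F$, stationarity to collapse the integral to $c_F\pi R^2$, independence of the gradient and the Hessian at a point (vanishing odd derivatives of $C_F$, i.e.\ the block $B=0$), and the same covariance data $\E[F_{11}^2]=24g_4$, $\E[F_{12}^2]=\E[F_{11}F_{22}]=8g_4$. The only difference is that you carry out the key expectation $\E[|\det H_F(0)|]=32g_4/\sqrt{3}$ in full (diagonalizing to $8g_4(2X^2-Y^2-Z^2)$, using the exponential law of $Y^2+Z^2$ and the Gaussian moment $\E[e^{-X^2}]=1/\sqrt{3}$), where the paper instead quotes this value by following the proof of Proposition~1.1 in Cammarota--Wigman, so your write-up is in fact more self-contained.
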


To simplify the formulas in the main theorem we introduce the following notation:
\begin{equation}
\begin{aligned}
\label{eq:phi def}
\phi(g_{4},g_{6})&:=100g_{4}^{4}-396g_{4}^{2}g_{6}+405g_{6}^{2}>0,\\
\varphi(g_{4},g_{6})&:=-20g_{4}^{4}+88g_{4}^{2}g_{6}-99g_{6}^{2},
\end{aligned}
\end{equation}
and finally
\begin{equation}
\begin{aligned}
\label{eq:AB def}
A(g_{4},g_{6})&:=\sqrt{\varphi(g_{4},g_{6})-(2 g_{4}^{2}- 5g_{6})\sqrt{\phi(g_{4},g_{6})}},\\
B(g_{4},g_{6})&:=\sqrt{-\varphi(g_{4},g_{6})-(2g_{4}^{2}-5g_{6})\sqrt{\phi(g_{4},g_{6})}},
\end{aligned}
\end{equation}
with both $A(g_{4},g_{6})$ and $B(g_{4},g_{6})$ {\em real}.

\begin{theorem}
\label{thm:exp 2 corr main}
Let $F:\R^{2}\rightarrow\R$ be a nonconstant stationary isotropic Gaussian random field, and assume that $F$ is
a.s. $C^{4+\epsilon}(\R^{2})$ for some $\epsilon>0$. Then the $2$-point correlation function corresponding to the critical points admits the following expansion around the origin:
\begin{equation}
\label{eq:K2 exp main}
K_{2}(r) = \frac{\sqrt{3}}{\pi^{2}} \frac{A(g_{4},g_{6})^{2}+B(g_{4},g_{6})^{2}}{\sqrt{\phi(g_{4},g_{6})}}+O_{r\rightarrow 0}(r^{2}),
\end{equation}
where $g_{2k}$ are given by \eqref{eq:g2k def}, and $\phi(\cdot,\cdot)$, $A(\cdot, \cdot)$ and $B(\cdot,\cdot)$ are
given by \eqref{eq:phi def}-\eqref{eq:AB def}.
\end{theorem}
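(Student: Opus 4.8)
The plan is to work directly from the Kac--Rice representation \eqref{eq:K2 Kac Rice} and analyse its two factors separately as $r=\|x-y\|\to 0$. By stationarity and isotropy I may fix $x=0$, $y=re_1$, so that
\[
K_2(r)=\phi_{(\nabla F(0),\nabla F(re_1))}(0,0)\cdot\E\bigl[\,|\det H_F(0)\,\det H_F(re_1)|\ \big|\ \nabla F(0)=\nabla F(re_1)=0\,\bigr].
\]
The preparatory step is to record the joint Gaussian law of the relevant finite jet (gradients, Hessian entries, and third-order derivatives at both points) purely in terms of $g_4,g_6$. Since $C_F(r)$ is a function of $\|x\|^2$ with the expansion \eqref{eq: C series}, every same-point derivative covariance is an explicit multiple of $g_4$ or $g_6$ — for instance $\E[(\partial_1^2F)^2]=24g_4$, $\E[(\partial_1^3F)^2]=720g_6$, $\E[\partial_1F\,\partial_1^3F]=-24g_4$ — and the two-point covariances follow by Taylor-expanding these in $r$. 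I would also use the odd/even decoupling of an isotropic field: derivatives of odd total order are uncorrelated with those of even total order, because their cross-covariances are odd-order derivatives of $C_F$ at the origin and hence vanish.

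The crux is that both Kac--Rice factors are singular on the diagonal and their singularities must be balanced. The vector $(\nabla F(0),\nabla F(re_1))$ degenerates as $r\to0$, so its density at the origin blows up like $r^{-2}$; simultaneously the conditioning forces $\det H_F$ to vanish, so the conditional expectation collapses like $r^2$. To make this rigorous I would pass to the non-degenerate variables $W=\bigl(\nabla F(0),\,\tfrac1r(\nabla F(re_1)-\nabla F(0))\bigr)$, whose linear Jacobian yields $\phi_{(\nabla F(0),\nabla F(re_1))}(0,0)=r^{-2}\phi_W(0)$, with $\phi_W(0)$ converging to the density at the origin of the non-degenerate Gaussian vector $(\partial_1F,\partial_2F,\partial_1^2F,\partial_1\partial_2F)(0)$. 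By the odd/even decoupling this limiting density factorises into an explicit constant times $g_4^{-1}$.

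For the conditional expectation I would exploit that, after the same change of variables, the conditioning is an exact linear constraint which Taylor expansion turns into $\partial_1^2F(0)=-\tfrac r2\partial_1^3F(0)+O(r^2)$ and $\partial_1\partial_2F(0)=-\tfrac r2\partial_1^2\partial_2F(0)+O(r^2)$. Substituting these into $\det H_F(0)$ and $\det H_F(re_1)$ shows that each determinant is $O(r)$ and, crucially, that the two are opposite in sign to leading order (the two colliding critical points are generically one extremum and one saddle), so $|\det H_F(0)\,\det H_F(re_1)|$ has a clean $r^2$ leading term. Taking the conditional expectation of this term reduces to a Gaussian integral over the surviving second- and third-order variables; diagonalising the associated conditional covariance is what manufactures the eigenvalue-type quantities $A(g_4,g_6)^2,B(g_4,g_6)^2$ of \eqref{eq:AB def} and the determinantal normalisation $\sqrt{\phi(g_4,g_6)}$ of \eqref{eq:phi def}, while splitting the integral according to the sign of each $\det H_F$ (the index of the critical point) is what produces the two summands. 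Multiplying the two limits, the powers of $r$ and of $g_4$ cancel and one arrives at \eqref{eq:K2 exp main}; as a sanity check one notes the algebraic collapse $\tfrac{A^2+B^2}{\sqrt\phi}=-2(2g_4^2-5g_6)$, so the $A,B,\phi$ packaging is really only needed for the index-resolved correlations, and normalising the random wave model to $g_4=\tfrac14$, $g_6=\tfrac1{36}$ offers a concrete test against \eqref{eq:K2 asymp BCW}.

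I expect the main obstacle to be precisely this conditional-expectation computation: one must control the $r$-expansion of a nearly degenerate conditional Gaussian law, justify that the leading $O(r)$ cancellation inside the two determinants is uniform enough to pass to the limit inside $\E|\cdot|$ (in particular near the measure-small set where the leading term vanishes and the sign of the product is not yet fixed), and then evaluate the integral of an absolute value of a product of quadratic forms. The $C^{4+\epsilon}$ hypothesis is exactly what legitimises both the fourth-order Taylor coefficients entering the constraints and the Kac--Rice formula itself.
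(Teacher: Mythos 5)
You have the right mechanism, and it is the same one driving the paper's proof: the density of $(\nabla F(0),\nabla F(re_1))$ blows up like $r^{-2}$ (the paper computes $\sqrt{\det {\bf A}(r)}=16\sqrt3\,g_4r^2+O(r^4)$), while the conditioned product of Hessian determinants collapses like $r^2$ with the two determinants opposite in sign to leading order. But your organisation is genuinely different. The paper conditions the full six-dimensional Hessian vector, computes the conditional covariance ${\bf \Delta}(r)$ exactly, diagonalises it in closed form for every $r>0$ (Lemmas \ref{eigenv} and \ref{eigenvect}, via computer algebra), and only then expands in $r$; the quantities $A$, $B$, $\sqrt{\phi}$ are artifacts of that eigendecomposition. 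You instead regularise first (the divided-difference vector $W$), turn the conditioning into the exact a.s.\ identities $\partial_1^2F(0)=-\tfrac r2\partial_1^3F(0)+O(r^2)$, $\partial_1\partial_2F(0)=-\tfrac r2\partial_1^2\partial_2F(0)+O(r^2)$, and read off the leading term $\tfrac{r^2}{4}(\partial_1^3F)^2(\partial_2^2F)^2$ of $|\det H_F(0)\det H_F(re_1)|$; by odd/even decoupling its limiting conditional expectation factorises as $\Var(\partial_1^3F\mid\partial_1F=0)\cdot\Var(\partial_2^2F\mid\partial_1^2F=0)=144(5g_6-2g_4^2)\cdot\tfrac{64}{3}g_4$. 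This avoids the eigendecomposition entirely, and makes positivity transparent: non-degeneracy of the conditional variance of $\partial_1^3F$ \emph{is} the strict Cauchy--Schwarz inequality $g_4^2<\tfrac52 g_6$ for nonconstant fields. What the paper's heavier machinery buys is reuse: the explicit expansions of the $b$'s and $c$'s feed directly into the index-resolved Theorem \ref{min_max_sadd}, which your leaner route would have to redo.

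Three corrections. First, your sketch only yields the constant plus $o(1)$ (at best $O(r)$); the claimed $O(r^2)$ requires expanding both factors one order further and showing the odd-in-$r$ terms cancel (in the paper this drops out because all the explicit expansions are even in $r$; in your framework you would argue it from the symmetry of swapping the two points). Second, your claim that ``splitting the integral according to the sign of each $\det H_F$ produces the two summands'' $A^2,B^2$ is a misattribution: in the paper they come from the cross term $2AB\,\xi_3\xi_4$ integrating to zero, and in your route no index-splitting occurs at all -- the answer appears directly as a multiple of $5g_6-2g_4^2$, consistent with your collapse identity \eqref{eq:A+B=ident}. Third, a warning for your sanity check: assembling your numbers gives $K_2(0^+)=\tfrac{4\sqrt3}{\pi^2}(5g_6-2g_4^2)=\tfrac{2\sqrt3}{\pi^2}\tfrac{A^2+B^2}{\sqrt{\phi}}$, which agrees with the paper's own intermediate display \eqref{eq:k2 series} combined with $\sqrt{\det{\bf A}(r)}\sim16\sqrt3 g_4r^2$ (note $384\cdot(2\pi)^3/((2\pi)^5\cdot 16\sqrt3)=2\sqrt3/(2\pi)^0\pi^2$, i.e.\ $6/\sqrt3=2\sqrt3$, not $\sqrt3$), and agrees with \eqref{eq:K2 asymp BCW} once you include the Jacobian factor $2^4$ from rescaling $J_0(r)$ to $J_0(2r)$ (so $g_4=\tfrac14$, $g_6=\tfrac1{36}$, giving $\tfrac1{6\sqrt3\pi^2}=16\cdot\tfrac1{2^53\sqrt3\pi^2}$). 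This is twice the constant displayed in \eqref{eq:K2 exp main}, so if your test ``fails'' by exactly a factor $2$, the discrepancy lies in the final simplification of the paper's proof, not in your method; the qualitative conclusion, Corollary \ref{cor:no repulsion}, is unaffected either way.
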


\begin{remark}
One can replace the $C^{4+\epsilon}$ assumption of Theorem \ref{thm:exp 2 corr main} by the more natural $C^{3+\epsilon}$, which would result in the error term 
of $O(r^2)$ in \eqref{eq:K2 exp main} be replaced by $o(1)$.
\end{remark}

\vspace{2mm}

We would like to analyse the leading term in \eqref{eq:K2 exp main}, in particular, whether it may vanish for some values of $g_{4},g_{6}$ that do correspond to some random field, equivalently, whether $A(g_{4},g_{6})^{2}+B(g_{4},g_{6})^{2}$ might vanish. We will show below (see \eqref{eq:A+B=ident}) that $A(g_{4},g_{6})^{2}+B(g_{4},g_{6})^{2}=0$ for some real strictly positive $g_{4},g_{6}$, if and only if
\[
g_{4}^{2}= \frac{5}{2}g_{6}.
\]
To analyse this equation we write the derivatives in terms of the spectral measure $\rho$ of $F$ (that is, $\rho$ is the Fourier transform of the covariance kernel $C(\cdot)$ on $\R^{2}$):
\begin{equation}
\label{eq: Taylor coeff}
C^{(2k)}_F(0) = (-1)^{k}(2\pi)^{2k}\int\limits_{\R^{2}}x_{1}^{2k}d\rho(x).
\end{equation}
By the Cauchy-Schwarz inequality and formulas \eqref{eq:g2k def} and \eqref{eq: Taylor coeff} we obtain the following inequality between $g_4$ and $g_6$:
\begin{equation}
\label{eq:g4, g6 rel}
g_4^2=\frac{(2\pi)^8}{(4!)^2} \left( \int x_1^4d\rho \right)^2 \le \frac{(2\pi)^8}{(4!)^2}\int x_1^6d\rho\int x_1^2d\rho=\frac{5}{2}g_2g_6=\frac{5}{2} g_6.
\end{equation}
The equality holds if and only if $\rho$ is the $\delta$-measure at the origin, equivalently, $F$ is a (random) constant. This means that the leading term is non-zero for non-degenerate $F$.

\begin{corollary}
\label{cor:no repulsion}
Under the assumptions of Theorem \ref{thm:exp 2 corr main} the critical points of $F$ do not repel or attract each other.
\end{corollary}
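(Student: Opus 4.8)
The plan is to read the statement off directly from the leading coefficient in Theorem \ref{thm:exp 2 corr main}. By the definitions of attraction and repulsion recalled in the introduction, the critical points attract precisely when $K_2(r)\to\infty$ and repel precisely when $K_2(r)\to 0$ as $r\to 0$; hence to establish the corollary it suffices to show that $K_2(r)$ converges to a \emph{finite, strictly positive} limit. First I would invoke Theorem \ref{thm:exp 2 corr main} to obtain
\[
\lim_{r\to 0}K_2(r) = a_F := \frac{\sqrt 3}{\pi^2}\,\frac{A(g_4,g_6)^2+B(g_4,g_6)^2}{\sqrt{\phi(g_4,g_6)}},
\]
so that the entire question reduces to verifying $0<a_F<\infty$.

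Finiteness is immediate. The quantity $\phi(g_4,g_6)$ is strictly positive by \eqref{eq:phi def}, so $\sqrt{\phi(g_4,g_6)}>0$; and $A(g_4,g_6)^2+B(g_4,g_6)^2$ is a finite nonnegative real number because $A$ and $B$ are real by the standing convention in \eqref{eq:AB def}. Thus $a_F$ is a well-defined finite constant, and in particular $K_2(r)$ cannot blow up as $r\to 0$, which already rules out attraction.

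For strict positivity I would use the reduction recorded in the discussion preceding the corollary: the identity \eqref{eq:A+B=ident} shows that, for strictly positive $g_4,g_6$, the vanishing $A(g_4,g_6)^2+B(g_4,g_6)^2=0$ is equivalent to the single algebraic relation $g_4^2=\tfrac{5}{2}g_6$. I would then bring in the Cauchy--Schwarz inequality \eqref{eq:g4, g6 rel}, which expresses $g_4$ and $g_6$ through the spectral moments $\int x_1^4\,d\rho$ and $\int x_1^6\,d\rho$ and yields $g_4^2\le\tfrac{5}{2}g_6$, with equality if and only if $\rho$ is the Dirac mass at the origin, i.e. $F$ is an a.s.\ constant field. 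Since Theorem \ref{thm:exp 2 corr main} assumes $F$ nonconstant, the inequality is strict, so $g_4^2<\tfrac{5}{2}g_6$ and therefore $A(g_4,g_6)^2+B(g_4,g_6)^2>0$, giving $a_F>0$ and ruling out repulsion.

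Combining the two bounds gives $K_2(r)\to a_F$ with $0<a_F<\infty$, so neither $K_2(r)\to 0$ nor $K_2(r)\to\infty$ occurs, which is exactly the assertion that the critical points neither repel nor attract. I expect the only non-routine ingredient to be the equivalence \eqref{eq:A+B=ident} between the vanishing of $A^2+B^2$ and the relation $g_4^2=\tfrac{5}{2}g_6$; once that algebraic identity is established, the Cauchy--Schwarz step together with the nonconstancy hypothesis makes the conclusion follow immediately.
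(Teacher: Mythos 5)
Your proposal is correct and takes essentially the same route as the paper: the text preceding the corollary likewise reduces everything to the finiteness and strict positivity of the constant in \eqref{eq:K2 exp main}, invoking the identity \eqref{eq:A+B=ident} to show that $A(g_4,g_6)^2+B(g_4,g_6)^2=0$ forces $g_4^2=\tfrac{5}{2}g_6$, and then the Cauchy--Schwarz inequality \eqref{eq:g4, g6 rel} on spectral moments, whose equality case is exactly $\rho=\delta_0$, i.e.\ $F$ constant, contradicting the hypothesis. Nothing is missing.
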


Our next result is analogous to Theorem \ref{thm:exp 2 corr main},
while separating the critical points into different types: minima, maxima and saddle points.

\begin{theorem}[Separating minima, maxima, saddles] \label{min_max_sadd}
The $2$-point correlation functions corresponding to saddles, local minima, local maxima, and local extrema admit the following expansion around the origin:
\begin{align*}
K_2^{min, min}(r) =O(r^3 \log(1/r)),\;\;\; K_2^{max, max}(r)=O(r^3 \log(1/r)),
\end{align*}
\begin{align*}
 K_2^{saddle, saddle}(r)=O(r^3 \log(1/r)),\;\;\; K_2^{e,e}(r)=O(r^3 \log(1/r)).
\end{align*}
The situation with $K_2^{max, min}(r)$ is a bit more delicate. For a generic field, it is $O(r^3)$, but if the coefficient in front of $r$ in the expansion of the Hessian vanishes (namely $b_{1,1}=0$ in \eqref{eq: b c expansion}), then it is $O(r^7)$. Importantly, it does happen for RWM.

\end{theorem}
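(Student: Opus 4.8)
The plan is to run the Kac--Rice representation \eqref{eq:K2 Kac Rice} with the critical points separated by type, and to read off the near-diagonal order of each correlation by Taylor expanding the two Hessians about the midpoint. By isotropy I may take $y=x+re_{1}$, and I set $m=(x+y)/2$. The type-separated two-point function is obtained from \eqref{eq:K2 Kac Rice} by inserting, inside the conditional expectation, the indicator of the appropriate spectral condition on each Hessian: positive definiteness for a minimum, negative definiteness for a maximum, and $\det H_{F}<0$ for a saddle. Exactly as in the proof of Theorem \ref{thm:exp 2 corr main}, the prefactor $\phi_{(\nabla F(x),\nabla F(y))}(0,0)$ grows like a constant times $r^{-2}$, since the natural coordinates on the conditioning event are $\nabla F(m)$ together with the rescaled difference $r^{-1}(\nabla F(y)-\nabla F(x))$. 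Conditioning both gradients to vanish forces $H_{F}(x)e_{1}$ and $H_{F}(y)e_{1}$ to be $O(r)$, so the limiting behaviour is governed by the $O(1)$ Gaussian coefficients $a:=\partial_{2}^{2}F(m)$, $\beta:=\tfrac12\partial_{1}^{3}F(m)$, $\delta:=\tfrac12\partial_{1}^{2}\partial_{2}F(m)$, and $\nu:=\tfrac12\partial_{1}\partial_{2}^{2}F(m)$, with higher-order corrections controlled by the $C^{4+\epsilon}$ hypothesis.

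The mechanism behind the theorem is a single sign flip. Writing $H_{ij}$ for the entries of $H_{F}$, the midpoint expansion yields
\[
H_{11}(x)=-r\beta+O(r^{2}),\qquad H_{11}(y)=+r\beta+O(r^{2}),
\]
\[
H_{12}(x)=-r\delta+O(r^{2}),\qquad H_{12}(y)=+r\delta+O(r^{2}),
\]
while $H_{22}(x),H_{22}(y)=a+O(r)$ differ only at order $r$ through $\nu$. Hence the small eigenvalue at $x$ has leading sign opposite to that at $y$, and the two large eigenvalues share the sign of $a$, giving $\det H_{F}(x)=-ra\beta-r^{2}\delta^{2}+O(r^{2})$ and $\det H_{F}(y)=+ra\beta-r^{2}\delta^{2}+O(r^{2})$. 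The admissible configurations at leading order are therefore (saddle, extremum) and (extremum, saddle): these occupy $O(1)$-measure regions with $|\det H_{F}(x)\det H_{F}(y)|=O(r^{2})$, and after the $r^{-2}$ prefactor they reconstitute the order-$1$ total of Theorem \ref{thm:exp 2 corr main}. Every same-type pairing, and both mixed-extremum pairings, are pushed onto thin regions, which I would estimate next.

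For two saddles one needs $\det H_{F}(x)<0$ and $\det H_{F}(y)<0$, which by the formulas above is the hyperbolic condition $|a\beta|<r\delta^{2}$; there $\det H_{F}(x)\det H_{F}(y)=r^{4}\delta^{4}-r^{2}a^{2}\beta^{2}=O(r^{4})$. Integrating the bounded Gaussian density over $\{|a\beta|\lesssim r\}$ contributes a factor $O(r\log(1/r))$, the logarithm coming precisely from the range $r\lesssim|a|\lesssim1$ in which the large eigenvalue also degenerates, so that $K_{2}^{\text{saddle},\text{saddle}}(r)=O(r^{3}\log(1/r))$. For two minima, or two maxima, the two positivity conditions $\det H_{F}(x)>0$, $\det H_{F}(y)>0$ are mutually exclusive unless the shared large eigenvalue itself degenerates, $a=O(r)$; this confines the pairing to a set of measure $O(r)$ carrying determinants of size $O(r^{2})$, so the contribution is $O(r^{3})$ with no logarithm, a fortiori within $O(r^{3}\log(1/r))$. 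Since $K_{2}^{e,e}$ is the sum of the four extremum--extremum pairings, none of which involves a saddle, it obeys the same bound.

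The delicate case is $K_{2}^{\max,\min}$. A maximum at $x$ and a minimum at $y$ force the shared large eigenvalue to change sign between $x$ and $y$, so this pairing lives on $a=O(r)$, where $H_{F}(x)=rM_{x}+O(r^{2})$ and $H_{F}(y)=rM_{y}+O(r^{2})$ with $M_{x},M_{y}$ assembled from $\beta,\delta,\nu$ and the rescaled $r^{-1}\partial_{2}^{2}F(m)$. Generically the linear-in-$r$ coefficient $b_{1,1}$ of the Hessian expansion \eqref{eq: b c expansion} is nonzero, the definiteness conditions on $M_{x}$ and $M_{y}$ cut out an $O(1)$-measure set, and one gets $K_{2}^{\max,\min}(r)=O(r^{3})$. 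When $b_{1,1}=0$ the leading asymmetry between $M_{x}$ and $M_{y}$ that allows opposite definiteness collapses, the admissible region shrinks by two further powers of $r$ at each point, and tracking the first nonvanishing coefficients places the contribution at $O(r^{7})$; I would then verify directly, substituting $C_{F}=J_{0}$ into \eqref{eq: b c expansion}, that indeed $b_{1,1}=0$ for the RWM. The main obstacle throughout is the bookkeeping: one must expand the conditional Gaussian law of $(H_{F}(x),H_{F}(y))$ to the correct order in $r$, justify exchanging the $r\to0$ limit with the integration using the $C^{4+\epsilon}$ regularity (this is the point where the $\log(1/r)$ versus $o(1)$ dichotomy noted in the remark above originates), and carry the regime-by-regime integration cleanly enough to expose both the logarithm in the saddle case and the fourfold cancellation in the $b_{1,1}=0$ case.
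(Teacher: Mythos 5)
Your blueprint is essentially the paper's: restrict the Kac--Rice density to the spectral region prescribing each type, and exploit the near-diagonal antisymmetry of the two conditioned Hessians. Your midpoint expansion (with $a\leftrightarrow\xi_6$ and $\beta\leftrightarrow\xi_3A+\xi_4B$ up to normalization) is exactly the paper's relations $b_{1,0}=b_{2,0}$, $b_{1,1}=-b_{2,1}$, $c_{1,0}=c_{2,0}=0$, $c_{1,1}=-c_{2,1}$ in \eqref{eq: b c expansion}, and your saddle--saddle and generic max--min counts (confinement to $\{|a\beta|\lesssim r\}$ of measure $O(r\log(1/r))$, resp.\ to $\{|a|\lesssim r\}$ of measure $O(r)$, with integrand $O(r^4)$ and the $r^{-2}$ density prefactor) coincide with the paper's computation on ${\mathcal S}^5$ in \eqref{k2_s}. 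There is, however, a genuine error in your min--min/max--max case: the containment of the min--min region in $\{|a|\lesssim r\}$ is false, since two minima do \emph{not} force the shared large eigenvalue to degenerate --- take $a>0$ of order one and $\beta,\delta$ suitably small, and both Hessians can be positive definite. The degeneracy actually forced by your own sign flip sits in $\beta$: since $H_{11}(x)=-r\beta+O(r^2)$ and $H_{11}(y)=+r\beta+O(r^2)$ must both be positive, one gets $|\beta|=O(r)$, also a measure-$O(r)$ set, so your log-free $O(r^3)$ conclusion can be salvaged, but not through the set you name; alternatively, the determinant-only confinement $\{|a\beta|\lesssim r\}$ --- which is what the paper uses --- gives the stated $O(r^3\log(1/r))$.

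The more serious gap is the degenerate case $b_{1,1}=0$, which is the only genuinely delicate assertion of the theorem and which you proclaim rather than prove: ``the admissible region shrinks by two further powers of $r$ at each point'' is not an argument and does not identify the constraints. The paper derives them explicitly: with $b_{1,1}=0$, $b_{1,0}=b_{2,0}$ and $b_{1,2}=b_{2,2}$, traces of opposite sign force first $|s_6|\le Cr^2$ and then the order-$r^3$ cancellation $L(s_5,s_6)=\bigl|b_{1,0}+b_{1,2}r^2\bigr|\le Cr^3$ between the constant and quadratic trace coefficients; the exponent $7$ is then extracted from the measure of this set together with $|c_1c_2|=O(r^4)$ and the $r^{-2}$ prefactor. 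Be warned that this bookkeeping is truly delicate: the two constraints overlap in the coordinates $(s_5,s_6)$ --- the condition $L<Cr^3$ already implies $|s_6|=O(r^2)$ --- so their nominal powers of $r$ cannot simply be multiplied, and the finer structure of the integrand on this set (for instance the smallness of $c_{1,1}\propto s_6$ there, and the pinching of both eigenvalues when $c_i>0$ while the traces are $O(r^3)$) must be tracked; all of this requires expanding $b_i$ and $c_i$ to the orders actually used, which is where the $C^{4+\epsilon}$ hypothesis is consumed. Your closing plan --- verifying $b_{1,1}=0$ for the RWM by substituting $C_F=J_0$ into \eqref{eq: b c expansion} --- is correct and is precisely what the paper's remark amounts to, but without the quantitative derivation above your $O(r^7)$ remains an assertion.
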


In this note we generalize the results of \cite{BCW2017} to a generic class of stationary isotropic fields. Our proofs follow along the general strategy set within \cite{BCW2017}, but involve heavier and more technically challenging computations for performing the asymptotic analysis for the relevant Kac-Rice integrals. 
When this work was complete we have learned of the preprint by Aza\"{i}s and Delmas \cite[Theorem 5.2]{AD}, who obtained a similar result in any dimension by using a different method. Namely, they asymptotically evaluate the same two-point function around the diagonal, by bringing in techniques from Random Matrix Theory, as suggested by Fyodorov \cite{Fyodorov}. Namely, it was observed that the Hessian has the law of the sum of a diagonal matrix and a Gaussian Orthogonal Ensemble (GOE), and an explicit expression for the joint density of GOE eigenvalues is exploited. We believe that in two-dimensional case our method is more transparent and the computation is a less technical compared to \cite{AD}.
In principle, our method could be extended to any fixed dimension, but its computational complexity grows exponentially rapidly, and becomes impractical very soon. 

\subsection{Acknowledgement}
D.B. was partially funded by EPSRC Fellowship EP/M002896/1. D.B. was partially supported by the Ministry of Science and Higher Education of Russia (subsidy in the form of a grant for creation and development of
International Mathematical Centers, agreement no.  075-15-2019-1620, November 8, 2019).
The research leading to these results has received funding from the European Research Council under
the European Union's Seventh Framework Programme (FP7/2007-2013), ERC grant agreement n$^{\text{o}}$ 335141 (I.W.).
V.C. has received funding from the Istituto Nazionale di Alta Matematica (INdAM) through the GNAMPA Research Project 2020.

\section{Expected Number of Critical Points}

Counting the critical points in a ball $B(R) \subseteq \R^{2}$ is equivalent to counting the zeros of the map $x \to \nabla F(x)$. By the Kac-Rice formula the density of critical points is
\begin{equation*}
K_1(x)= \phi_{\nabla F(x)} (0) \cdot \mathbb{E}[ \left| \det{H_{F}(x)} \right| \big| \nabla F(x)=0],
\end{equation*}
where $\phi_{\nabla F(x)}$ is the Gaussian probability density of two-dimensional vector $\nabla F(x)\in \mathbb{R}^2$ evaluated at $0$. By the Kac-Rice formula, if $\nabla F(x)$  is nonsingular for all $x \in B(R)$, then
\begin{equation} \label{zero}
\mathbb{E}[\# (\Cc_{F}\cap B(R)) ]=\int_{B(R)} K_1(x) dx= \vol(B(R)) K_1.
\end{equation}
where in the last step we use the fact that $F$ is assumed isotropic. To write an analytic expressions for $K_1$ we evaluate the covariance matrix $\Sigma$ of the $5$-dimensional centred jointly Gaussian vector $(\nabla F(x), \nabla^2 F(x))$ where $\nabla^2 F(x)$ is the vectorized Hessian evaluated at $x$ (see Appendix \ref{AppA}):
\vspace{-0.1cm}
\begin{equation*}
\Sigma=\left( \begin{matrix}A & B \\ B^t &C \end{matrix} \right),
\end{equation*}
\vspace{-0.1cm}
where
\vspace{-0.1cm}
\begin{equation*}
A =\left( \begin{matrix}  2  &  0 \\   0 & 2  \end{matrix} \right), \hspace{1cm} B=0, \hspace{1cm} C =\left( \begin{matrix}  24 g_4& 0 & 8 g_4   \\  0 & 8 g_4 & 0\\ 8 g_4 & 0 & 24 g_4 \end{matrix} \right).
\end{equation*}
\vspace{-0.1cm}
Now using the value of the matrix $A$,
and thanks to the statistical independence of the first and the second order mixed derivatives of $F$ at every fixed point
$x \in \mathbb{R}^2$, we have
\begin{equation}  \label{uno}
K_1= \frac{1}{2 \pi \sqrt{4 }} \mathbb{E}[ \left| \det{H_{F}(x)} \right| ].
\end{equation}
Using the value of the
covariance matrix $C$ of $\nabla^2 F(x)$, and following the argument in the proof of \cite[Proposition 1.1]{CamWig}, we note that
\vspace{-0.05cm}
\begin{equation} \label{due}
 \mathbb{E}[ \left| \det{H_{F}(x)} \right| ]=8 g_4 \mathbb{E}[ \left|Y_1 Y_3 -Y_2^2 \right| ] = 8 g_4 \frac{2^2}{\sqrt 3},
\end{equation}
\vspace{-0.05cm}
where $(Y_1,Y_2,Y_3)$ is a centred jointly Gaussian random vector with covariance matrix
\begin{equation*}
C =\left( \begin{matrix}  3& 0 & 1  \\  0 & 1 & 0\\ 1 & 0 & 3 \end{matrix} \right).
\end{equation*}
The statement follows combining \eqref{zero}, \eqref{uno} and \eqref{due}:
\begin{equation}
\mathbb{E}[\# (\Cc_{F}\cap B(R)) ]= \vol(B(R)) \frac{1}{2 \pi \sqrt{4}}8 g_4 \frac{2^2}{\sqrt 3}=    \frac{8 }{\sqrt 3} g_4 R^2    .
\end{equation}

\section{Second Factorial Moment}

\subsection{On the Kac-Rice formula for computing the second factorial moment of the number of critical points}~

\smallskip

As explained in the introduction, for $x\ne y$
\begin{align*}
K_2(x,y)=\phi_{(\nabla F(x),\nabla F(y))} (0, 0) \cdot \mathbb{E}[ |\mathrm{det} H_{F}(x)| \cdot |\mathrm{det} H_{F}(y)| \big| \nabla F(x)=\nabla F(y)=0],
\end{align*}
i.e., a Gaussian integral involving the covariance function $C_F$ and its derivatives. This naturally reduces to studying the distribution of the centred Gaussian vector
\begin{equation} \label{repp}
(\nabla F(x),\nabla F(y),\nabla^2 F(x),\nabla^2 F(y))
\end{equation}
with covariance matrix ${\bf \Sigma}(x,y)$, $x,y \in  B(r)$. It is known \cite[Theorem 6.9]{AzWsch} that, if for all $x \ne y$ the Gaussian distribution of $(\nabla F(x),\nabla F(y))$ is non-degenerate, the second factorial moment of the number of critical points in $B(r)$ can be expressed as
\begin{align} \label{18:03}
\mathbb{E}[\# (\Cc_{F}\cap B(r))  \;  (\# (\Cc_{F}\cap B(r)) -1)]=\iint_{B(r) \times B(r)} K_2(x,y) \; d x \, d y.
\end{align}
We note that $K_2$ is everywhere nonnegative.

\subsection{Proof of Theorem \ref{thm:exp 2 corr main}}

\label{subsec: proof of theorem}

\begin{proof}
In order to study the asymptotic behaviour of the second factorial moment of the number of critical points in $B(r)$, as the radius $r$ of the disk goes to zero,  we need to study the centred Gaussian random vector \eqref{repp}.
Its covariance matrix ${\bf \Sigma}={\bf \Sigma}(x,y)$ is of the form
$${\bf \Sigma}=\left(\begin{array}{ccc}
{\bf A}  & {\bf B}\\
 {\bf B}^t& {\bf C}
\end{array} \right),$$
where
${\bf A}={\bf A}(x,y)$ is the covariance matrix of the gradients $(\nabla F(x),\nabla F(y))$, ${\bf C}={\bf C}(x,y)$ is the covariance matrix of the second order derivatives $(\nabla^2 F(x),\nabla^2 F(y))$ and ${\bf B}={\bf B}(x,y)$ is the covariance matrix of the first and second order derivatives.

The function $F$ is isotropic, hence, the law of the critical point process is also invariant w.r.t. translations and rotations. This means that its $2$-point function $K_2(x,y)$ depends on $||x-y||$ only (but not the covariance matrix $\bf \Sigma$); by the standard abuse of notation we write
\begin{equation}
\label{eq:K2=K2(d)}
K_2(x,y)=K_2(||x-y||).
\end{equation}
We will asymptotically evaluate $K_2(x,y)$ for $x=(0,0)$ and $y=(0,r)$ in the relevant regime, which, thanks to the by-product \eqref{eq:K2=K2(d)} of the isotropic property of $F$, will also yield the same for $K_2(r)$.

In Appendix \ref{ucn} the entries of ${\bf \Sigma}(x,y)$ are evaluated for the said $x$ and $y$,
and in Appendix \ref{matrixdelta} the covariance matrix ${\bf \Delta}={\bf \Delta}(x,y)$ of $(\nabla^2 F(x),\nabla^2 F(y))$ conditioned on $\nabla F(x)=\nabla F(y)=0$ is evaluated, i.e.,
\begin{align*}
{\bf \Delta}={\bf C} - {\bf B}^t {\bf A}^{-1} {\bf B}.
\end{align*}
From now on we will only work with ${\bf \Sigma}(r)$ and ${\bf \Delta}(r)$ are defined (not canonically) as ${\bf \Sigma}(x,y)$ and ${\bf \Delta}(x,y)$ with $x=(0,0)$ and $y=(0,r)$.

Since $K_2$ is written in terms of the density and expectation of (a function of) a six-dimensional Gaussian vector, it can be written as a Gaussian integral:
\begin{equation}
\label{k2}
\begin{aligned}
K_2(r)&=\frac{1}{(2 \pi)^2 \sqrt{\text{det}({\bf A}(r))}} \\
&\times \int_{\mathbb{R}^6 }|\zeta_{1} \zeta_{3} - \zeta^2_{2}| \cdot |\zeta_{4} \zeta_{6} - \zeta^2_{5}| \frac{1}{(2 \pi)^3} \frac{1}{\sqrt{\text{det}({\bf \Delta}(r))}} \exp \left\{-\frac{1}{2} \zeta^t {\bf \Delta}^{-1}(r) \zeta \right\} d \zeta,
\end{aligned}
\end{equation}
where $\zeta=(\zeta_1, \zeta_2, \zeta_3, \zeta_4, \zeta_5, \zeta_6)$ is a vector in $\mathbb{R}^6$.
Indeed, the density of $(\nabla F(0,0),\nabla F(0,r))$ at zero is given by $(2\pi)^{-2}(\det({\bf A}(r)))^{-1/2}$, and the integral gives the expectation of $|\mathrm{det} H_{F}(x)| \cdot |\mathrm{det} H_{F}(y)|$ with respect to the Gaussian measure of $(\nabla^2 F(x),\nabla^2 F^2(y))$ conditioned on $\nabla F(x)=\nabla F(y)=0$, that is, having covariance ${\bf \Delta}(r)$.

Our aim is to study the asymptotic behaviour of the $2$-point correlation function $K_2$ in the {\em vicinity} of $r=0$.
When facing an integral of this type, it is useful to transform the coordinates so that rewrite the integrand in terms of the standard Gaussian vector.

From the linear algebra point of view, this is equivalent to the diagonalization of the covariance matrix. For every $r>0$, ${\bf \Delta}(r)$ is symmetric, hence it can be written as
\begin{align} \label{18:07}
{\bf \Delta}(r)={\bf P}^{-1}(r) {\bf \Lambda}(r) {\bf P}(r)={\bf P}^t(r) {\bf \Lambda}(r) {\bf P}(r),
\end{align}
where  ${\bf \Lambda}(r)$ is a diagonal matrix of eigenvalues $\lambda_i(r)$, and ${\bf P}(r)$ is the orthogonal matrix made of the normalized eigenvectors of ${\bf \Delta}(r)$. Note that since ${\bf \Delta}(r)$ is positive definite, the eigenvalues $\lambda_i$ are positive. Using this, we can introduce a new variable
\begin{equation}
\label{eq: zeta}
\xi={\bf \Lambda}^{-1/2}(r) {\bf P}(r) \zeta, \quad \zeta= {\bf Q}(r) {\bf \Lambda}^{1/2}(r)\xi
\end{equation}
where ${\bf Q}={\bf Q}(r)={\bf P}^{-1}(r)={\bf P}^t(t)$.

Both ${\bf \Lambda}$ and ${\bf P}$ can be computed explicitly in terms of the covariance kernel $C_F$. These expressions are not very illuminating. These computations can be found in Lemma \ref{eigenv} and Lemma \ref{eigenvect}.
With this transformation of variables, the Gaussian density in the integral \eqref{k2} becomes the standard Gaussian density $(2\pi)^{-3}\exp(-|\xi|^2/2)$.
The terms $\zeta_{1} \zeta_{3} - \zeta^2_{2}$ and $\zeta_{4} \zeta_{6} - \zeta^2_{5}$ in \eqref{k2} are quadratic forms in $\xi_i$, with coefficients given in terms of $\sqrt{\lambda_i (r)}$ and entries of ${\bf Q}(r)$, whose precise expressions are 
\begin{equation*}
\begin{aligned}
\zeta_{1} \zeta_{3} - \zeta^2_{2} &
=\left( \sum_{j=1}^6 q_{1 j}(r) \sqrt{\lambda_{j}(r) } \; \xi_j \right)
 \left( \sum_{j=1}^6 q_{3 j}(r) \sqrt{\lambda_{j}(r) } \; \xi_j \right)
 \\
 & - \left( \sum_{j=1}^6 q_{2 j}(r) \sqrt{\lambda_{j}(r) } \; \xi_j \right)^2,
\\
\zeta_{4} \zeta_{6} - \zeta_{5}^2 &
= \left( \sum_{j=1}^6 q_{4 j}(r) \sqrt{\lambda_{j}(r) } \; \xi_j \right)
\left( \sum_{j=1}^6 q_{6 j}(r) \sqrt{\lambda_{j}(r) } \; \xi_j \right)
\\
&- \left( \sum_{j=1}^6 q_{5 j}(r) \sqrt{\lambda_{j}(r) } \; \xi_j \right)^2.
\end{aligned}
\end{equation*}
To understand the behaviour {\em around} $r=0$ of $K_2(r)$, we Taylor expand around the origin the entries in ${\bf \Lambda}(r)$ and ${\bf Q}(r)$. Since they are explicitly given in terms of $C_F$ and its derivatives, the first few terms of the Taylor expansion of  ${\bf \Lambda}(r)$ and ${\bf Q}(r)$ can be written in terms of $g_4$ and $g_6$, and the same procedure could be performed for $\det({\bf A}(r))$. 
In what follows the constants involved in the `O'-notation bounding various error terms encountered are {\em absolute}, but may vary from line to line. The main reason is that we are dealing with continuous functions that are homogeneous in $||\xi||$. Their radial (in $\xi$) part is a continuous function on the sphere. Compactness of the sphere allows to write estimates that are asymptotic in $r$ and uniform in the radial part of $\xi$. We explicitly mention the dependence on $||\xi||$. Technical details can be found in Appendices \ref{AppA} and \ref{ucn}. The results of the technically demanding explicit computations, performed in Appendix \ref{ucn}, are as follows.
The matrix $\bf A$ has a simple block structure, and so it is relatively easy to compute its determinant. An explicit computation in Appendix \ref{ucn} gives
$$
\sqrt{\det ({\bf A}(r))}=16 \sqrt 3  g_4 r^2 - 32 \sqrt 3 (g_4^2+g_6) r^4+O(r^6).
$$
For the terms $\zeta_{1} \zeta_{3} - \zeta^2_{2}$ and $\zeta_{4} \zeta_{6} - \zeta^2_{5}$  we have
\begin{align*}
\zeta_{1} \zeta_{3} - \zeta^2_{2}
= -  \frac{24 \sqrt 2 \, \sqrt g_4}{\sqrt{3} \; \phi(g_4,g_6)^{1/4} }   \left[ \xi_3 A(g_4,g_6)+ \xi_4 B(g_4,g_6)\right] \xi_6 \;  r +   (1+ ||\xi||^2)\; O(r^2)
\end{align*}
and
\begin{align*}
\zeta_{4} \zeta_{6} - \zeta_{5}^2
= \frac{24 \sqrt 2 \, \sqrt g_4}{\sqrt{3} \; \phi(g_4,g_6)^{1/4} }   \left[ \xi_3 A(g_4,g_6)+ \xi_4 B(g_4,g_6) \right] \xi_6 \;  r +   (1+ ||\xi||^2)\; O(r^2),
\end{align*}
and for the product, we obtain
\begin{align*}
\left(\zeta_{1} \zeta_{3} - \zeta^2_{2}\right) \cdot \left( \zeta_{4} \zeta_{6} - \zeta_{5}^2 \right)
= -  \frac{384\, g_4}{\sqrt{\phi(g_4,g_6)} }   \left[ \xi_3 A(g_4,g_6)+ \xi_4 B(g_4,g_6) \right]^2\xi_6^2 \;  r^2 +   (1+ ||\xi||^4)\; O(r^4 ),
\end{align*}
where $\phi(g_{4},g_{6})$, $\varphi(g_{4},g_{6})$, $A(g_{4},g_{6})$,
and $B(g_{4},g_{6})$ are defined in \eqref{eq:phi def} and \eqref{eq:AB def}.
Since the covariance matrices are real symmetric, we only manipulate with real numbers, so, in particular, both $A(g_{4},g_{6})$ and $B(g_{4},g_{6})$ are real. Combining these expansions, we get
\begin{equation}
\label{eq:k2 series}
\begin{aligned}
K_2(r)&= \frac{1}{ (2\pi)^5 \sqrt{ \text{det}({\bf A}(r)) }} \left[ \frac{ 384 \, g_4}{\sqrt{\phi(g_4,g_6)}}   \int_{\mathbb{R}^6}
 \left[ \xi_3 A(g_4,g_6) + \xi_4 B(g_4,g_6) \right]^2    \xi_6^2  \right. \\
& \;\; \left. \times   \exp \left\{-\frac{1}{2} \sum_{i=1}^6 \xi^2_i \right\} d \xi \;  r^2+ O(r^4) \right].
\end{aligned}
\end{equation}

The multiple integral in \eqref{eq:k2 series} can be written as a product of one-dimensional integral. Using a standard fact that
\begin{align*}
\int_{\mathbb{R}} \xi^2_i  \exp \left\{-\frac{1}{2}  \xi^2_i \right\} d \xi_i = \sqrt{2 \pi}, \hspace{0.5cm}\int_{\mathbb{R}}  \exp \left\{-\frac{1}{2}  \xi^2_i \right\} d \xi_i = \sqrt{2 \pi},  \hspace{0.5cm}\int_{\mathbb{R}}  \xi_i  \exp \left\{-\frac{1}{2}  \xi^2_i \right\} d \xi_i =0,
\end{align*}
we can rewrite \eqref{eq:k2 series} as
\begin{align*}
&\int_{\mathbb{R}^6} \left[ \xi_3 A(g_4,g_6) + \xi_4 B(g_4,g_6) \right]^2   \xi_6^2   \exp \left\{-\frac{1}{2} \sum_{i=1}^6 \xi^2_i \right\} d \xi \\
&=  \int_{\mathbb{R}^6}  \left[ \xi^2_3 A(g_4,g_6)^{2} + \xi^2_4 B(g_4,g_6)^{2}  + 2 \xi_3 \xi_4 A(g_4,g_6)  B(g_4,g_6) \right]  \xi_6^2  \exp \left\{-\frac{1}{2} \sum_{i=1}^6 \xi^2_i \right\} d \xi  \\
 &=  \int_{\mathbb{R}^6}   \left[ \xi^2_3 A(g_4,g_6)^{2} + \xi^2_4 B(g_4,g_6)^{2} \right]  \xi_6^2  \exp \left\{-\frac{1}{2} \sum_{i=1}^6 \xi^2_i \right\} d \xi \\
 &= [ A(g_4,g_6)^{2} + B(g_4,g_6)^{2} ]  (2 \pi)^3 .
\end{align*}
We finally obtain that, as $r \to 0$,
\begin{align*}
K_2(r)&= \frac{\sqrt 3}{\pi^2} \frac{A(g_4,g_6)^{2} + B(g_4,g_6)^{2}}{\sqrt{ \phi(g_4,g_6)}}   + O(r^2),
\end{align*}
 and, in view of \eqref{18:03}, as  $r \to 0$,
\begin{align*}
\mathbb{E}[\# (\Cc_{F}\cap B(r))  \;  (\# (\Cc_{F}\cap B(r)) -1)]
=  \frac{\sqrt 3}{\pi^2} \frac{A(g_4,g_6)^{2} + B(g_4,g_6)^{2}}{\sqrt{ \phi(g_4,g_6)}}  \pi^2 r^4+O(r^6).
\end{align*}

\end{proof}

We note that
\begin{equation}
\label{eq:A+B=ident}
A(g_4,g_6)^{2} + B(g_4,g_6)^{2}=-(4\, g_4^2 -10 \,  g_6) \sqrt{\phi(g_4,g_6)},
\end{equation}
so $A(g_4,g_6)^{2} + B(g_4,g_6)^{2}=0$, if and only if
\begin{align*}
g^2_4= \frac{5}{2} g_6.
\end{align*}

\section{Proof of Theorem \ref{min_max_sadd}: minima, maxima and saddles}

To prove Theorem \ref{min_max_sadd} we need to evaluate the two-point correlation function $K_2$ modified for the respective types of critical points. The modified function $K_2$ has the same expression \eqref{k2} with the integration over a proper subset of $\mathbb{R}^6$, that is the $\zeta$ are restricted to a domain corresponding to the prescribed type of critical points.

Let us introduce two Hessians at points $x$ and $y$ (already conditioned to be critical points). In terms of $\zeta_i$ these Hessians are given by
\begin{align*}
H_1= \left( \begin{matrix} \zeta_1 & \zeta_2 \\ \zeta_2 & \zeta_3 \end{matrix} \right), \hspace{1cm} H_2= \left( \begin{matrix} \zeta_4 & \zeta_5 \\ \zeta_5 & \zeta_6 \end{matrix} \right).
\end{align*}
The particular type of a critical point depends on the eigenvalues of its Hessian, we may reformulate this dependency in terms of the following quantities:
\begin{equation}
\label{eq:bi=tr, ci=det}
b_i=- \mathrm{Tr} H_i \hspace{1cm} \text{and}  \hspace{1cm}  c_i=\det(H_i).
\end{equation}
A critical point with Hessian $H_i$ is a minimum if $c_i > 0$ and $b_i <0$, a maximum if $c_i >0$ and $b_i >0$, and a saddle if $c_i <0$ (we
ignore the probability $0$ event when one of the eigenvalues vanishes).
As before, we rewrite $\zeta_i$ in terms of $\xi_i$. 
Expanding in powers of $r$ we get
\begin{equation}
\label{eq: b c expansion}
\begin{aligned}
&b_1=-(\zeta_1+\zeta_3)=b_{1,0}+b_{1,1} r + b_{1,2} r^2 +O(r^3),\\
&b_2=-(\zeta_4+\zeta_6)=b_{2,0}+b_{2,1} r + b_{2,2} r^2 +O(r^3), \\
&c_1= \zeta_1 \zeta_3 -\zeta^2_2=c_{1,0}+c_{1,1} r + c_{1,2} r^2 +O(r^3),\\
&c_2= \zeta_4 \zeta_6 -\zeta^2_5=c_{2,0}+c_{2,1} r + c_{2,2} r^2 +O(r^3).
\end{aligned}
\end{equation}
We observe that all the coefficients $b_{i,j}$ are linear functions of the coordinates of $\xi$, and all the coefficients $c_{i,j}$ are quadratic forms of the entries of $\xi$, and also notice that
$$b_{1,0}=b_{2,0}, \; b_{1,1}=-b_{2,1}, \; b_{1,2}=b_{2,2}, \;   c_{1,0}=c_{2,0}=0, \; c_{1,1}=-c_{2,1}, \;c_{1,2}=c_{2,2},$$
where
\begin{align*}
b_{1,0}&=-\frac{8}{\sqrt 3} \sqrt{g_4} \xi_6,\\
b_{1,1}&=3 \sqrt 2 \frac{1}{\phi^{1/4}(g_4,g_6)} [\xi_3 A(g_4,g_6)+\xi_4 B(g_4,g_6)] \\
&\;\;\; +\sqrt 2 \frac{1}{\sqrt{\phi(g_4,g_6)}} \frac{2g_4^2-3 g_6}{|2g_4^2-3 g_6|} \\
&\;\;\; \times \left[-\xi_4 \sqrt{-10 g_4^2+27 g_6+\sqrt{\phi(g_4,g_6)}} \sqrt{\phi(g_4,g_6)+ (8 g_4^2 -18 g_6) \sqrt{\phi(g_4,g_6)} } \right. \\
&\;\;\; \;\;\; \;\; +  \left. \xi_3 \sqrt{-10 g_4^2+27 g_6-\sqrt{\phi(g_4,g_6)}} \sqrt{\phi(g_4,g_6)- (8 g_4^2 -18 g_6) \sqrt{\phi(g_4,g_6)} }  \right],  \\
b_{1,2}&=\frac{2}{\sqrt 3} \frac{g_6}{\sqrt{g_4}} \xi_6 + \frac{1}{\sqrt{g_4}} \xi_5 \sqrt{280 g_4 g_8 - 153 g_6^2}, \\
c_{1,1}&=-8 \sqrt{6}  \frac{\sqrt{g_4}}{\phi^{1/4}(g_4,g_6)} \xi_6 \left[  \xi_4   B(g_4,g_6) +  \xi_3  A(g_4,g_6) \right]\\
c_{1,2}&= 4 (2 g_4^2-9 g_6) \xi_1^2 + 6 \frac{1}{\sqrt{\phi(g_4,g_6)}} \frac{2g_4^2-3 g_6}{|2g_4^2-3 g_6|} \left[ \xi_3 A(g_4,g_6) + \xi_4 B(g_4,g_6) \right] \\
&\;\;\; \times \left[-\xi_4 \sqrt{ -10 g_4^2+27 g_6+\sqrt{\phi(g_4,g_6)}  } \sqrt{\sqrt{\phi(g_4,g_6)} +8 g_4^2 -18 g_6   }  \right. \\
&\;\;\; \;\;\; \;\; +  \left. \xi_3 \sqrt{-10 g_4^2+27 g_6-\sqrt{\phi(g_4,g_6)} )} \sqrt{ \sqrt{\phi(g_4,g_6)} - (8 g_4^2 -18 g_6) }   \right]\\
&\;\;\; + 8 \xi_6 \left[ g_6 \xi_6 + \xi_5 \sqrt{\frac{280}{3} g_4 g_8 -51 g_6^2}\right] .
\end{align*}
Note that $\phi(g_4,g_6)> 0$ (unless $g_4=g_6=0$) and that
$B(g_4,g_6) \ge 0$ (given  $g_{4}^{2}< \frac{5}{2}g_{6}$ ) and equal to zero if and only if $g_4=g_6=0$. We introduce $s_i=\xi_i/|\xi|$, $s\in {\mathcal S}^5$, $|\xi| \in (0,\infty)$, abusing notation we denote with the same letters the rescaled coefficients that are now function of $s_i$ instead of $\xi_i$. Using the above notation and upon integrating out the radial part, the two point function $K_2(R)$ becomes
\begin{equation} \label{k2_s}
K_2(r)= \frac{12}{\pi^5 \sqrt{\det({\bf A}(r))}} \int_{{\mathcal S}^5} |c_1(s) c_2(s)| d s,
\end{equation}
where $c_{j}$ are as in \eqref{eq:bi=tr, ci=det}, and $d s$ is the spherical volume element on the unit sphere ${\mathcal S}^5$.

\subsection{Minimum-minimum}
The two-point correlation function $K^{min,min}_2(r)$ corresponding to the local minima is given by \eqref{k2_s} with integration domain
\begin{align*}
{\mathcal S}_{min, min}&=\{s \in {\mathcal S}^5: \; c_1(s)>0, c_2(s)>0, b_1(s)<0, b_2(s)<0\} \\
&\subseteq \{s \in {\mathcal S}^5: \; c_1(s)>0, c_2(s)>0 \}.
\end{align*}
Since $c_{1,1}=-c_{2,1}$, for some constant $C$ sufficiently big we have
\begin{align*}
{\mathcal S}_{min, min} & \subseteq  \{s \in {\mathcal S}^5: \; |c_{1,1}(s)|<C r\},
\end{align*}
and
\begin{align*}
\int_{{\mathcal S}_{min, min}} |c_1(s) c_2(s)| d s &\le  \int_{  \{s \in {\mathcal S}^5: \; |c_{1,1}(s)|<C r\}} |c_1(s) c_2(s)| d s\\
&=O(r^4)  \int_{  \{s \in {\mathcal S}^5: \; |c_{1,1}(s)|<C r\} }d s=O(r^5 \log(1/r)),
\end{align*}
where the last equality is a mere estimate for the volume of the set where the product of two coordinates is bounded by a quantity of order $r$. This yields
\[
K_2^{min, min}(r)=O(r^3 \log(1/r)).
\]

\subsection{Maximum-maximum}
Similarly we note that the two-point correlation function \\ $K^{max,max}_2(r)$ corresponds to \eqref{k2_s} with integration domain
\begin{align*}
{\mathcal S}_{max, max}&=\{s \in {\mathcal S}^5: \; c_1(s)>0, c_2(s)>0, b_1(s)>0, b_2(s)>0\}\\
& \subseteq \{s \in {\mathcal S}^5: \; c_1(s)>0, c_2(s)>0 \}
\end{align*}
and it immediately shows that, as before, we have
$$K_2^{max, max}(r)=O(r^3 \log(1/r)).$$

\subsection{Saddle-saddle and extremum-extremum}

For two extrema or two saddle points both $c_i$ are forced to be of the same sign, the same argument as for the minimum-minimum case yields
$$K_2^{saddle, saddle}(r)=O(r^3 \log(1/r)), \hspace{1cm} K_2^{e,e}(r)=O(r^3 \log(1/r)).$$

\subsection{Minimum-maximum}
We consider now the integration domain
\begin{align*}
{\mathcal S}_{min, max}&=\{s \in {\mathcal S}^5: \; c_1(s)>0, c_2(s)>0, b_1(s) \text{ and } b_2(s) \text{ of different sign} \},
\end{align*}
as above, $c_{1,1}=-c_{2,1}$, forces $ |c_{1,1}|<C r$ for some constant $C$ sufficiently big. Assuming
$b_{1,1}\ne 0$ and observing that $b_{1,0}=b_{2,0}$, $b_1$ and $b_2$ of different sign implies that $|s_6|< C r$  for some big constant $C$. We have
\begin{align*}
\int_{{\mathcal S}_{min, max}} |c_1(s) c_2(s)| d s& \le  \int_{ \left\{ s \in {\mathcal S}^5: \; |s_6|<Cr \text{ and } |c_{1,1}(s)|<C r  \right\}} |c_1(s) c_2(s)| d s \\
 & \le O(r^4) \int_{ \left\{s \in {\mathcal S}^5: \; |s_6|<Cr \text{ and } |c_{1,1}(s)|<C r\right\}} d s = O(r^5),
\end{align*}
that yields
$$K_2^{min, max}(r)=O(r^3).$$

In the case $b_{1,1}= 0$ (which is the case for RWM), the condition that $b_1$ and $b_2$ are of different sign implies that $|s_6|< C r^2$  for some big constant $C$. Under the assumption $|s_6|< C r^2$ both $b_i$ are of the form
\begin{align*}
b_i=- \frac{8}{\sqrt 3} \sqrt{g_4} s_6+\frac{1}{\sqrt g_4} s_5  \sqrt{280 g_4 g_8 - 153 g_6^2} +O(r^3),
\end{align*}
and again, $b_i$ of different signs, forces the term corresponding to $O(r^3)$ to dominate, that is
$$L(s_5,s_6)=\left| - \frac{8}{\sqrt 3} \sqrt{g_4} s_6+\frac{1}{\sqrt{g_4}} s_5  \sqrt{280 g_4 g_8 - 153 g_6^2}\; r^2 \right| < C r^3.$$
Combining all of this we obtain the estimate
\begin{align*}
\int_{{\mathcal S}_{min, max}} |c_1(s) c_2(s)| d s& \le  \int_{ \left\{ s \in {\mathcal S}^5: \; |s_6|<C r^2 \text{ and } L(s_5,s_6)< C r^3  \right\}} |c_1(s) c_2(s)| d s \\
 & \le O(r^4) \int_{ \left\{s \in {\mathcal S}^5: \; |s_6|<C r^2 \text{ and } L(s_5,s_6)< C r^3 \right\}} d s = O(r^9),
\end{align*}
and
\begin{align*}
K_2^{max, min}(r)=O(r^7).
\end{align*}

\appendix

\section{Covariance matrix of $(\nabla F(x), \nabla^2 F(x))$} \label{cov_exp} \label{AppA}

By translation invariance, the covariance matrix $\Sigma$ of the $5$-dimensional centred Gaussian vector which combines the gradient and the vectorized Hessian evaluated at $x$, does not depend on the point $x \in \mathbb{R}^2$. It is convenient to write $\Sigma$ as a block matrix:
\begin{equation*}
\Sigma=\left( \begin{matrix}A & B \\ B^t &C \end{matrix} \right),
\end{equation*}
where
\begin{equation*}
A=\mathbb{E}[\nabla F(x)^t \cdot \nabla F(y)], \hspace{0.5cm} B=\mathbb{E}[\nabla F(x)^t \cdot \nabla^2 F(y)],\hspace{0.5cm} C=\mathbb{E}[\nabla^2 F(x)^t \cdot \nabla^2 F(y)].
\end{equation*}
The computations of $A$, $B$ and $C$ do not require sophisticated arguments other than iterative differentiation of the covariance function $C_F(||x-y||)$, with covariances of the derivative given by derivatives of the covariance kernel.

As a concrete example of such computation here are the details of the computation for $(A)_{1,1}$. Taking into account the Taylor expansion of $C_F(r)$ around the origin:
\begin{equation*}
C_{F}(||x-y||)=1-\|x-y\|^{2}+g_{4} ||x-y||^{4}-g_{6}||x-y||^{6}+g_{8}||x-y||^{8}+O(||x-y||^{10}),
\end{equation*}
we have
\begin{align*}
(A)_{1,1}= \lim_{ x \to y} \mathbb{E}[ \partial_{x_1} F(x)  \partial_{y_1} F(y) ]=  \lim_{ x \to y} \frac{\partial^2}{\partial_{x_1} \partial_{y_1} } C_{F}(||x-y||)=2.
\end{align*}
With analogous calculations, but using higher order derivatives of $C_{F}(||x-y||)$ we compute the entries of $C$. Finally, since the first and second order derivatives of any stationary field are independent at every fixed point $x \in \mathbb{R}^2$, we immediately have $B = 0$.

\section{Covariance matrix of $(\nabla F(x), \nabla F(x), \nabla^2 F(x), \nabla^2 F(y))$} \label{ucn}
In this section we compute the covariance matrix ${\bf \Sigma}(x,y)$ for the $10$-dimensional Gaussian random vector which combines the gradient and the vectorized Hessian evaluated at $x=(0,0)$ and $y=(0,r)$; thanks to the isotropic property of the field $F$, this is sufficient in order to evaluate $K_2(r)$ for all relevant $r$.
It is convenient to write the matrix $\Sigma(z,w)$ in block form, that is
\begin{equation*}
{\bf \Sigma}(x,y) = \left( \begin{matrix}{\bf A}(x,y) & {\bf B}(x,y) \\ {\bf B}^t(x,y) & {\bf C}(x,y) \end{matrix} \right), \hspace{1cm} \left. {\bf \Sigma}(x,y) \right|_{x=(0,0), y=(0,r)}={\bf \Sigma}(r).
\end{equation*}
The matrices ${\bf A}$, ${\bf B}$ and ${\bf C}$ also have a natural block structure:
\begin{align*}
{\bf  A}(r)=&\left. {\bf A}(x,y) \right|_{x=(0,0), y=(0,r)}= \left( \begin{matrix}A & A(r) \\ A(r) &A \end{matrix} \right),\\
{\bf B}(r)= &\left. {\bf B}(x,y) \right|_{x=(0,0), y=(0,r)}=\left( \begin{matrix}0 & B(r) \\ -B(r) &0 \end{matrix} \right),\\
{\bf C}(r)= &\left. {\bf C}(x,y) \right|_{x=(0,0), y=(0,r)}=\left( \begin{matrix}C & C(r) \\ C(r) &C \end{matrix} \right),
\end{align*}
where $A$ and $C$ are the same as in Section \ref{cov_exp}. $A(r)$ is a diagonal matrix
\begin{equation*}
A(r)=\left( \begin{matrix} \alpha_1(r) & 0 \\ 0 & \alpha_2(r) \end{matrix} \right),
\end{equation*}
the diagonal elements $\alpha_i$ are found by differentiating the covariance kernel of $F$; from the Taylor series for $C_F$ one immediately gets
\begin{align*}
\alpha_1(r)&=\left. \frac{\partial^2}{\partial_{x_1} \partial_{y_1}} C_{F}(||x-y||) \right|_{x=(0,0), y=(0,r)}=2  -4 g_4 r^2 +6 g_6 r^4 +O(r^6),\\
 \alpha_2 (r)&=\left. \frac{\partial^2}{\partial_{x_2} \partial_{y_2}} C_{F}(||x-y||) \right|_{x=(0,0), y=(0,r)}= 2  -12 g_4 r^2 +30  g_6 r^4 +O(r^6),
\end{align*}
so that
\begin{equation*}
\sqrt{\det ({\bf A}(r))}=16 \sqrt 3  g_4 r^2 - 32 \sqrt 3 (g_4^2+g_6) r^4+O(r^6).
\end{equation*}

Analogously to the above, we derive the entries of the matrices $B(r)$ and $C(r)$:
\begin{equation*}
B(r)=\left( \begin{matrix} 0& \beta_1(r) & 0 \\ \beta_1(r) &0& \beta_2(r) \end{matrix} \right),
\end{equation*}
with
\begin{align*}
\beta_1(r)& =-8 g_4 r + 24 g_6 r^3  +O(r^5),\\
\beta_2(r)&  = -24 g_4 r +120 g_6 r^3   +O(r^5),
\end{align*}
and
\begin{equation*}
C(r)=\left( \begin{matrix} \gamma_1(r) & 0 & \gamma_2(r) \\  0 & \gamma_2(r) & 0 \\ \gamma_2(r) &0& \gamma_3(r) \end{matrix} \right),
\end{equation*}
with
\begin{align*}
\gamma_1(r)&= 24  g_4- 72  g_6 r^2 +144  g_8 r^4 +O(r^6) ,\\
\gamma_2(r)&= 8 g_4- 72  g_6 r^2+240   g_8 r^4 +O(r^6),\\
 \gamma_3(r)& = 24 g_4-  360  g_6 r^2 + 1680  g_8 r^4 +O(r^6).
\end{align*}

\section{Conditional covariance matrix}  \label{matrixdelta}
The covariance matrix of the conditional vector
$$( \nabla^2 F(x), \nabla^2 F(y)|  \nabla F(x)= \nabla F(y)=0)$$
is
\begin{equation*}
{\bf \Delta}(r)= {\bf C}(r)-  {\bf B}^t(r)  {\bf A}^{-1}(r)  {\bf B}(r)=\left( \begin{matrix} \Delta_1(r) & \Delta_2(r)\\ \Delta_2(r) & \Delta_1(r)\end{matrix} \right)
\end{equation*}
where $\Delta_1$ and $\Delta_2$ are $3 \times 3$ symmetric matrices such that
\begin{equation*}
\begin{aligned}
 \Delta_1(r)
= \left( \begin{array}{ccc}
 \frac{64}{3} g_4 + a_1(r)&0& a_4(r)\\
0&a_2(r)&0\\
a_4(r)&0&a_3(r)
\end{array}\right), \hspace{0.5cm}
\Delta_2(r)
= \left( \begin{array}{ccc}
\frac{64}{3} g_4+ a_5(r)&0& a_8(r)\\
0&a_6(r)&0\\
a_8(r)&0&a_7(r)
\end{array}\right),
\end{aligned}
\end{equation*}
where
\begin{align*}
\label{eq: series for a}
a_1(r)&=-\frac{2 \beta_1^2(r)}{4 - \alpha_2^2(r)}+\frac{8}{3} g_4,\\
a_2(r)&=- \frac{2 \beta_1^2(r)}{4 - \alpha_1^2(r)}+8 g_4,\\
a_3(r)&= -\frac{2 \beta_2^2(r)}{4  - \alpha_2^2(r)}+ 24 g_4,\\
a_4(r)&= - \frac{2   \beta_1(r) \beta_2(r)}{4 - \alpha_2^2(r)}+ 8 g_4,\\
a_5(r)& =\gamma_1(r)- \frac{ \alpha_2(r) \beta_1^2(r) }{4 - \alpha_2^2(r)}- \frac{64}{3} g_4,\\
a_6(r)&= \gamma_2(r) - \frac{ \alpha_1(r) \beta_1^2(r)}{4 - \alpha_1^2(r)}, \\
a_7(r)&= \gamma_3(r)- \frac{ \alpha_2(r) \beta_2^2(r)}{4 - \alpha_2^2(r)},\\
a_8(r)& = \gamma_2(r) - \frac{ \alpha_2(r) \beta_1(r) \beta_2(r)}{4- \alpha_2^2(r)}.
\end{align*}
From the formulas for elements of ${\bf A}$, ${\bf B}$ and ${\bf C}$, we can obtain explicit formulas and expansions (with the use of
Mathematica software) for $a_i$, note that $a_i(r)=O(r^2)$.

\subsection{Eigenvalue and eigenvectors of ${\bf \Delta}(r)$, $r > 0$} \label{A_i}

Now we compute the eigenvalues and eigenvectors of the matrix ${\bf \Delta}(r)$, for $r > 0$. The following notation is required.

\begin{align*}
A_1^{+}(r)&= a_1(r) + a_5(r)+\frac{2^7}{3} g_4, \hspace{1cm} A_1^{-}(r)= a_1(r) - a_5(r),
\end{align*}
\begin{align*}
A_2^{\pm}(r)&= a_2(r) \pm a_6(r), \\
A_3^{\pm}(r)&= a_3(r) \pm a_7(r), \\
A_4^{\pm}(r)&= a_4(r) \pm a_8(r);
\end{align*}
with $a_i(r)$  defined above.

\begin{lemma} \label{eigenv}
For every $r >0$, the eigenvalues of the matrix ${\bf \Delta}(r)$ have the following explicit expressions:
\begin{equation}
\label{eq:lambda explicit}
\begin{aligned}
\lambda_1(r)&=A_2^{-}(r),\\
\lambda_2(r)&=A_2^{+}(r), \\
\lambda_3(r)&=\frac{1}{2} \Big[A_1^{-}(r) +A_3^{-}(r) - \sqrt{(A_1^{-}(r) - A_3^{-}(r))^2+4 A_4^{-}(r)^2} \Big], \\
\lambda_4(r)&=\frac{1}{2} \Big[A_1^{-}(r) +A_3^{-}(r) + \sqrt{(A_1^{-}(r) - A_3^{-}(r))^2+4 A_4^{-}(r)^2} \Big],\\
\lambda_5(r) &=\frac{1}{2} \Big[A_1^{+}(r) +A_3^{+}(r) - \sqrt{(A_1^{+}(r) - A_3^{+}(r))^2+4 A_4^{+}(r)^2} \Big], \\
\lambda_6(r)&=\frac{1}{2} \Big[A_1^{+}(r) +A_3^{+}(r) + \sqrt{(A_1^{+}(r) - A_3^{+}(r))^2+4 A_4^{+}(r)^2} \Big].
\end{aligned}
\end{equation}
\end{lemma}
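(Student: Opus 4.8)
The plan is to exploit the block structure of ${\bf \Delta}(r)$. From Appendix \ref{matrixdelta} we have
\[
{\bf \Delta}(r)=\begin{pmatrix} \Delta_1(r) & \Delta_2(r)\\ \Delta_2(r) & \Delta_1(r)\end{pmatrix},
\]
a $6\times 6$ symmetric matrix built from the two $3\times 3$ symmetric blocks $\Delta_1,\Delta_2$. Any matrix of the form $\begin{pmatrix} X & Y\\ Y & X\end{pmatrix}$ is block-diagonalised by the orthogonal, symmetric and self-inverse change of basis $U=\tfrac{1}{\sqrt2}\begin{pmatrix} I_3 & I_3\\ I_3 & -I_3\end{pmatrix}$, and a direct computation gives
\[
U\,{\bf \Delta}(r)\,U^{t}=\begin{pmatrix} \Delta_1(r)+\Delta_2(r) & 0\\ 0 & \Delta_1(r)-\Delta_2(r)\end{pmatrix}.
\]
Hence the spectrum of ${\bf \Delta}(r)$ is the disjoint union of the spectra of the two $3\times 3$ matrices $\Delta_1(r)\pm\Delta_2(r)$, and it remains only to diagonalise these.

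First I would read off the entries of $\Delta_1\pm\Delta_2$ from the explicit forms in Appendix \ref{matrixdelta}. Adding and subtracting the two blocks and using the abbreviations $A_1^{\pm},A_2^{\pm},A_3^{\pm},A_4^{\pm}$ introduced in Section \ref{A_i} (note that the $\tfrac{2^7}{3}g_4$ shift in $A_1^{+}$ accounts for the common $\tfrac{64}{3}g_4$ diagonal term carried by each block) one obtains
\[
\Delta_1(r)+\Delta_2(r)=\begin{pmatrix} A_1^{+} & 0 & A_4^{+}\\ 0 & A_2^{+} & 0\\ A_4^{+} & 0 & A_3^{+}\end{pmatrix},\qquad \Delta_1(r)-\Delta_2(r)=\begin{pmatrix} A_1^{-} & 0 & A_4^{-}\\ 0 & A_2^{-} & 0\\ A_4^{-} & 0 & A_3^{-}\end{pmatrix}.
\]
The key structural feature, inherited from the vanishing of the $(1,2)$ and $(2,3)$ entries of both $\Delta_1$ and $\Delta_2$, is that in each matrix the second coordinate decouples from the first and third. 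Thus $e_2$ is an eigenvector contributing the eigenvalue $A_2^{\pm}$, while the remaining spectrum is that of the $2\times 2$ block $\begin{pmatrix} A_1^{\pm} & A_4^{\pm}\\ A_4^{\pm} & A_3^{\pm}\end{pmatrix}$.

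Finally I would invoke the elementary eigenvalue formula for a symmetric $2\times 2$ matrix, namely that $\begin{pmatrix} a & b\\ b & d\end{pmatrix}$ has eigenvalues $\tfrac12\big[(a+d)\pm\sqrt{(a-d)^2+4b^2}\big]$. Applying this with $(a,b,d)=(A_1^{-},A_4^{-},A_3^{-})$ yields $\lambda_3,\lambda_4$, and with $(a,b,d)=(A_1^{+},A_4^{+},A_3^{+})$ yields $\lambda_5,\lambda_6$; together with $\lambda_1=A_2^{-}$ and $\lambda_2=A_2^{+}$ this reproduces \eqref{eq:lambda explicit}. The argument is essentially structural rather than computational, so I do not expect a genuine obstacle; the only point requiring care is the bookkeeping that identifies the entries of $\Delta_1\pm\Delta_2$ with the $A_i^{\pm}$, and in particular the constant shift appearing in $A_1^{+}$.
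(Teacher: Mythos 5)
Your proof is correct and essentially the same as the paper's: the paper invokes the block determinant identity $\det\begin{pmatrix} M & N\\ N & M\end{pmatrix}=\det(M+N)\det(M-N)$ on the characteristic polynomial, which is exactly what your conjugation by $U=\tfrac{1}{\sqrt2}\begin{pmatrix} I_3 & I_3\\ I_3 & -I_3\end{pmatrix}$ implements, and both arguments then reduce the spectra of $\Delta_1(r)\pm\Delta_2(r)$ identically (the decoupled second coordinate gives $A_2^{\pm}$, and the remaining $2\times2$ symmetric block is solved by the quadratic formula). Your bookkeeping, including the $\tfrac{2^7}{3}g_4$ shift in $A_1^{+}$ absorbing the two $\tfrac{64}{3}g_4$ diagonal terms, matches the paper's definitions.
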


\begin{proof}
We can compute explicitly the roots of
$$
\text{det}({\bf \Delta}(r)-\lambda I)=\text{det} \left( \begin{array}{cc} \Delta_1(r) - \lambda I & \Delta_2(r) \\ \Delta_2(r) & \Delta_1(r) -\lambda I
\end{array} \right),
$$
by observing that since $\Delta_i$ are square matrices, we have the following identity for the determinant of a block matrix
$$
\text{det} \left( \begin{array}{cc} \Delta_1(r) - \lambda I & \Delta_2(r) \\ \Delta_2(r) & \Delta_1(r) -\lambda I
\end{array} \right) = \text{det} (\Delta_1(r) - \lambda I - \Delta_2(r)) \, \text{det} (\Delta_1(r) - \lambda I + \Delta_2(r)).
$$
The matrices $\Delta_1(r) - \lambda I \pm \Delta_2(r)$ could be written in terms of $A_i^\pm$ as
\begin{align*}
\Delta_1(r) - \lambda I \pm \Delta_2(r)= \left(\begin{array}{ccc}
A_1^{\pm}(r)-\lambda & 0 & A_4^{\pm}(r) \\
0 & A_2^{\pm}(r) - \lambda &0 \\
A_4^{\pm}(r) & 0 & A_3^{\pm}(r)-\lambda
\end{array} \right).
\end{align*}
Since these matrices have many elements equal to zero, their determinants are particularly simple and could be factorized as
\begin{align*}
\text{det} (\Delta_1(r) - \lambda I \pm \Delta_2(r))= (A_2^{\pm}(r)-\lambda) [\lambda^2 - \lambda (A_1^{\pm}(r)+A_3^{\pm}(r))
+ A_1^{\pm}(r) A_3^{\pm}(r) - A_4^{\pm}(r)^2].
\end{align*}
The last factor is quadratic in terms of $\lambda$ and the roots could be found explicitly. They are equal to $\lambda_5$ and $\lambda_6$ in the ``$+$'' case and $\lambda_3$ and $\lambda_4$ in the ``$-$'' case.
\end{proof}

Lemma \ref{eigenv} expresses the eigenvalues of $\bf \Delta$ in terms of $A_i^\pm$ which, in their turn, are expressed in terms of $a_i$. We compute (with the use of Mathematica software) the asymptotic behaviour of $a_i$. Substituting these expansions into explicit formulas \eqref{eq:lambda explicit} we get expansions for $\lambda_i$ and $\sqrt{\lambda_i}$.\\
After obtaining the explicit formulas for eigenvalues we, again, use computer algebra to find explicit formulas for eigenvectors of $\bf \Delta$. Also the elements of $v_i$ are explicit algebraic expressions in terms of $a_i$:

\begin{lemma} \label{eigenvect}
For every $r>0$, the following vectors $v_i(r)$ are the eigenvectors of the matrix ${\bf \Delta}(r)$ corresponding to $\lambda_i(r)$
\begin{equation*}
\begin{aligned}
 v_1(r)&=(0,-1,0,0,1,0),\\
v_2(r)&=(0,1,0,0,1,0), \\
v_3(r)&=( v_{3 1}(r) , 0,-1,- v_{3 1}(r), 0,1), \\
v_4(r)&=( v_{4 1}(r) ,0,-1,- v_{4 1}(r) ,0,1), \\
v_5(r)&=(- v_{5 1}(r) ,0,1, - v_{5 1}(r) ,0,1), \\
v_6(r)&=(- v_{6 1}(r) ,0,1,- v_{6 1}(r) ,0,1).
\end{aligned}
\end{equation*}
where
\begin{equation*}
\begin{aligned}
v_{3 1}(r)&= \frac{A_3^-(r) - A_1^-(r) + \sqrt{ [A_3^-(r) - A_1^-(r) ]^2 + 4 A_4^-(r)^2 }}{2 A_4^-(r)}, \\
v_{4 1}(r)&= \frac{A_3^-(r) - A_1^-(r) - \sqrt{ [A_3^-(r) - A_1^-(r) ]^2 + 4 A_4^-(r)^2 }}{2 A_4^-(r)}, \\
v_{5 1}(r)&= \frac{A_3^+(r) - A_1^+(r) + \sqrt{ [A_3^+(r) - A_1^+(r) ]^2 + 4 A_4^+(r)^2 }}{2 A_4^+(r)}, \\
v_{6 1}(r)&= \frac{A_3^+(r) - A_1^+(r) - \sqrt{ [A_3^+(r) - A_1^+(r) ]^2 + 4 A_4^+(r)^2 }}{2 A_4^+(r)}.
\end{aligned}
\end{equation*}

\end{lemma}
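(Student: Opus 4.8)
The plan is to reuse the block structure
${\bf \Delta}(r)=\begin{pmatrix} \Delta_1(r) & \Delta_2(r)\\ \Delta_2(r) & \Delta_1(r)\end{pmatrix}$
that already drove the eigenvalue computation in Lemma~\ref{eigenv}. The crucial observation is that a symmetric matrix with this ``two equal diagonal blocks, two equal off-diagonal blocks'' shape is block-diagonalized by splitting $\R^6$ into symmetric and antisymmetric vectors: for any $u\in\R^3$ one has
\begin{align*}
{\bf \Delta}(r)\begin{pmatrix} u \\ u \end{pmatrix}=\begin{pmatrix} (\Delta_1+\Delta_2)u \\ (\Delta_1+\Delta_2)u \end{pmatrix},\qquad {\bf \Delta}(r)\begin{pmatrix} u \\ -u \end{pmatrix}=\begin{pmatrix} (\Delta_1-\Delta_2)u \\ -(\Delta_1-\Delta_2)u \end{pmatrix}.
\end{align*}
Consequently $(u,u)$ is an eigenvector of ${\bf \Delta}(r)$ with eigenvalue $\mu$ exactly when $u$ is an eigenvector of $\Delta_1+\Delta_2$ with eigenvalue $\mu$, and $(u,-u)$ is an eigenvector with eigenvalue $\nu$ exactly when $u$ is an eigenvector of $\Delta_1-\Delta_2$ with eigenvalue $\nu$. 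This is precisely the ``$\pm$'' dichotomy seen in the statement: $v_1,v_3,v_4$ have antisymmetric shape $(u,-u)$ and arise from $\Delta_1-\Delta_2$ (the ``$-$'' case, eigenvalues $\lambda_1,\lambda_3,\lambda_4$), whereas $v_2,v_5,v_6$ have symmetric shape $(u,u)$ and arise from $\Delta_1+\Delta_2$ (the ``$+$'' case, eigenvalues $\lambda_2,\lambda_5,\lambda_6$). Thus the $6\times 6$ problem reduces to the two $3\times 3$ problems already encountered.

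Next I would recall from the proof of Lemma~\ref{eigenv} the explicit sparse form
$$\Delta_1(r)\pm\Delta_2(r)=\begin{pmatrix} A_1^\pm(r) & 0 & A_4^\pm(r) \\ 0 & A_2^\pm(r) & 0 \\ A_4^\pm(r) & 0 & A_3^\pm(r) \end{pmatrix}.$$
Here the second coordinate decouples entirely, so $(0,1,0)$ is an eigenvector with eigenvalue $A_2^\pm(r)$. Lifting it through the correspondence above immediately yields $v_1(r)=(0,-1,0,0,1,0)$ with eigenvalue $A_2^-=\lambda_1$ and $v_2(r)=(0,1,0,0,1,0)$ with eigenvalue $A_2^+=\lambda_2$, the overall sign being irrelevant. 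The two remaining eigenvectors of $\Delta_1\pm\Delta_2$ live in the plane of the first and third coordinates, where the governing operator is the symmetric $2\times2$ block $\begin{pmatrix} A_1^\pm & A_4^\pm \\ A_4^\pm & A_3^\pm \end{pmatrix}$, whose two eigenvalues are exactly $\lambda_3,\lambda_4$ (for ``$-$'') and $\lambda_5,\lambda_6$ (for ``$+$'').

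Finally I would solve this $2\times2$ eigenproblem explicitly and match the components. Writing an eigenvector with third coordinate normalized as in the statement and imposing the eigen-equation for $\lambda_i$, the third-coordinate row gives $A_4^\pm v_{*1}=A_3^\pm-\lambda_i$; inserting the explicit $\lambda_i$ from Lemma~\ref{eigenv} reproduces the stated quotients for $v_{31},v_{41},v_{51},v_{61}$, with the branch of the radical $S=\sqrt{(A_3^\pm-A_1^\pm)^2+4(A_4^\pm)^2}$ fixed by which of the two eigenvalues is being matched (the elementary identity $\tfrac{S+D}{2A_4^\pm}=\tfrac{2A_4^\pm}{S-D}$, with $D=A_3^\pm-A_1^\pm$, shows the first- and third-row normalizations agree). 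I expect the only genuine subtlety to be bookkeeping rather than substance: the quoted expressions divide by $A_4^\pm(r)$, which vanishes at $r=0$, so one must observe that $A_4^\pm(r)\neq 0$ for $r>0$ (which can be read off the expansions of the $a_i$), both to guarantee that the $(1,3)$-block genuinely couples the two coordinates and to make the formulas well-defined — consistent with the lemma being stated only for $r>0$.
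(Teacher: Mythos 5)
Your argument is correct, and it supplies an actual derivation where the paper offers none: the authors obtained Lemma \ref{eigenvect} by computer algebra (``we, again, use computer algebra to find explicit formulas for eigenvectors of $\bf \Delta$''), leaving correctness to implicit verification of ${\bf \Delta}(r)v_i=\lambda_i(r)v_i$ by substitution. Your symmetric/antisymmetric splitting of $\R^6$ into vectors of the form $(u,u)$ and $(u,-u)$ is precisely the structural fact underlying the paper's own eigenvalue computation: in the proof of Lemma \ref{eigenv} it appears only in determinant form, through the identity $\det\left(\begin{smallmatrix} M & N\\ N & M\end{smallmatrix}\right)=\det(M-N)\det(M+N)$, so you are extending the paper's factorization of the characteristic polynomial to a factorization of the eigenproblem itself. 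Your bookkeeping checks out in detail: the decoupled second coordinate of $\Delta_1\pm\Delta_2$ gives $v_1$ (antisymmetric, eigenvalue $A_2^-=\lambda_1$) and $v_2$ (symmetric, $A_2^+=\lambda_2$); the $2\times 2$ block in coordinates $1,3$ gives, from the third row of the eigen-equation, $v_{*1}=(A_3^\pm-\lambda_i)/A_4^\pm$, and inserting $\lambda_3,\dots,\lambda_6$ from \eqref{eq:lambda explicit} reproduces the four stated quotients with the correct branch of the radical (e.g. $A_3^--\lambda_3=\tfrac12\bigl[A_3^--A_1^-+\sqrt{(A_3^--A_1^-)^2+4A_4^-(r)^2}\bigr]$, matching $v_{31}$), while your identity $(S+D)(S-D)=4(A_4^\pm)^2$ confirms consistency with the first row. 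What your route buys is a transparent, human-checkable proof of a lemma the paper asserts by Mathematica; what the paper's route buys is only speed.

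One shared caveat, which you rightly flag and the paper does not: both the printed formulas and your derivation divide by $A_4^\pm(r)$, so the lemma as stated ``for every $r>0$'' tacitly assumes $A_4^\pm(r)\neq 0$. The expansions of the $a_i$ control this only for small $r$; for general $r>0$ nonvanishing is not verified in the paper either (and at a zero of $A_4^\pm$ the $(1,3)$ block is diagonal, so the eigenvectors degenerate to coordinate vectors). Since the lemma is used only in the $r\to 0$ asymptotics, this does not affect the paper's results, but your proof is no weaker than the paper's statement on this point.
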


\end{document}